\documentclass[a4paper,10pt,reqno]{amsart}

\usepackage{ifpdf}
\ifpdf 
    \usepackage[pdftex]{graphicx}   
    \pdfcompresslevel=9 
    \usepackage[pdftex,     
            plainpages=false,   
            breaklinks=true,    
            colorlinks=true,
            pdftitle=My Document
            pdfauthor=My Good Self
           ]{hyperref} 
\else 
    \usepackage{graphicx}       
    \usepackage{hyperref}       
\fi 


\usepackage [latin1]{inputenc}
\usepackage{amsfonts,amsmath}	
\usepackage{amssymb}
\usepackage{verbatim}
\usepackage{amsopn}
\usepackage[english]{babel}
\usepackage{amsthm}
\usepackage{enumerate}
\usepackage{mathrsfs}	
\usepackage{mathtools}
\usepackage{color}
\usepackage{bbm}
\usepackage{esint}


\setlength{\topmargin}{-1cm}
\setlength{\textwidth}{16cm}
\setlength{\textheight}{23cm}
\setlength{\oddsidemargin}{0pt}
\setlength{\evensidemargin}{0pt}

\email{elio.marconi@unibas.ch}


\theoremstyle{definition} \newtheorem{definition}{Definition}[section]
\theoremstyle{definition} \newtheorem{remark}[definition]{Remark}
\theoremstyle{definition} \newtheorem{assum}[definition]{Assumption}
\theoremstyle{plain} \newtheorem{lemma}[definition]{Lemma}
\theoremstyle{plain} \newtheorem{proposition}[definition]{Proposition}
\theoremstyle{plain} \newtheorem{theorem}[definition]{Theorem}
\theoremstyle{plain} \newtheorem{corollary}[definition]{Corollary}
\theoremstyle{definition} 
\theoremstyle{plain} 
\theoremstyle{definition} \newtheorem{assumption}[definition]{Assumption}

\DeclareMathOperator{\supp}{supp}
\DeclareMathOperator{\Span}{span}

\newcommand{\R}{\mathbb{R}}

\newcommand{\N}{\mathbb{N}}
\newcommand{\Z}{\mathbb{Z}}
\newcommand{\TV}{\text{\rm Tot.Var.}}
\newcommand{\E}{\mathscr E}
\renewcommand{\H}{\mathscr H}
\newcommand{\M}{\mathcal M}
\newcommand{\D}{\mathcal D}
\newcommand{\Lip}{\mathrm{Lip}}
\newcommand{\Id}{\mathbb{I}}

\renewcommand{\div}{\mathrm{div}}
\newcommand{\loc}{\mathrm{loc}}

\DeclareMathOperator{\BV}{BV}

\newcommand{\FT}{\mathrm{FT}}
\newcommand{\e}{\varepsilon}

\numberwithin{equation}{section} 

\title[Solutions to scalar conservation laws with finite entropy production]{On the structure of weak solutions to scalar conservation laws with finite entropy production}

\author{Elio Marconi}

\thanks{The author acknowledges ERC Starting Grant 676675 FLIRT}

\begin{document}

\maketitle

\begin{abstract}
We consider weak solutions with finite entropy production to the scalar conservation law
\begin{equation*}
\partial_t u+\div_x F(u)=0 \quad \mbox{in }(0,T)\times \R^d.
\end{equation*}
Building on the kinetic formulation we prove under suitable nonlinearity assumption on $f$ that the set of non Lebesgue points of $u$ has 
Hausdorff dimension at most $d$.
A notion of Lagrangian representation for this class of solutions is introduced and this allows for a new interpretation of the entropy dissipation
measure.
\end{abstract}

\section{Introduction}
We study the structure of weak solutions to the scalar conservation law 
\begin{equation}\label{E_CL_intro}
\partial_t u + \div_x F(u)=0 \qquad \mbox{in }(0,T)\times \R^d.
\end{equation}
It is well-known that uniqueness for the Cauchy problem associated to \eqref{E_CL_intro} fails in the class of weak solutions and it is restored
by requiring the dissipation of every convex entropy:
\begin{equation}\label{E_mu_intro}
\mu_\eta:=\partial_t\eta(u)+\div_x Q(u) \le 0 \quad \mbox{in }\mathcal D',
\end{equation}
where $\eta''(v)\ge 0$ and $Q'(v)=\eta'(v)F'(v)$ for every $v\in \R$. Bounded solutions satisfying \eqref{E_mu_intro} for every convex entropy 
$\eta$ and
corresponding flux $Q$ are called entropy solutions and the Cauchy problem associated to \eqref{E_CL_intro} is well-posed in this class \cite{Kruzhkov_contraction}.
A rich literature investigates the regularizing effect and the fine properties that the nonlinearity of the flux function $F$ coupled with \eqref{E_mu_intro} induces 
on weak solutions. 

The one space dimensional case is special and this regularizing effect is now quite well understood. Starting from the celebrated one sided
Lipschitz estimate in \cite{Oleinik_translation} for uniformly convex fluxes several regularity results have been obtained, even for more general nonlinear fluxes \cite{ADL_note, Dafermos_inflection, Cheng_speed_BV, m_regularity}. 
The arguments essentially rely on the structure of characteristics: although the classical method of characteristics is not available for nonsmooth
solutions, some rigidity is preserved. In one space dimension two trajectories with different speed typically intersect and the interaction of the
characteristics provides the regularization. Of course this is not the case for several space dimensions.

A widely used tool to study entropy solutions to \eqref{E_CL_intro} is the kinetic formulation introduced in \cite{LPT_kinetic}, where also 
the first regularity results in terms of fractional Sobolev spaces have been obtained by means of velocity averaging lemmas.  
For further developments see \cite{TT_kinetic, GP_optimal}. Being the sign of $\mu_\eta$ not relevant
in the kinetic formulation, most of the available results in this direction hold in the more general setting where \eqref{E_mu_intro}
is replaced by
\begin{equation}\label{E_fin_intro}
\mu_\eta:=\partial_t\eta(u)+\div_x Q(u) \in \mathcal M ((0,T)\times \R^d),
\end{equation}
i.e. the entropy production measure $\mu_\eta$ is required to be locally finite but without constraints on its sign.
We refer to these solutions as \emph{weak solutions with finite entropy production}.
The first example where the sign of the entropy production is used to improve the available regularity results in the kinetic framework 
is \cite{GL_kinetic_entropic}.
In \cite{DLW_structure} (see also \cite{COW_bologna}) the authors proved that under mild nonlinearity assumptions on $f$, bounded weak solutions 
with finite entropy production share several fine properties with $BV$ functions:
more in details they proved that there exists a rectifiable set $J$ of dimension $d$ such that
\begin{enumerate}
\item $u$ has vanishing mean oscillation at every $(t,x)\notin J$;
\item $u$ has left and right traces on $J$;
\item $\mu_\eta\llcorner J= ((\eta(u^+),q(u^+)) - (\eta(u^-),q(u^-)))\cdot \mathbf n \llcorner J$, where $u^\pm$ denotes the traces on $J$ and $\mathbf n$ denotes the normal to $J$.
\end{enumerate}
For $\BV$ solutions (1) and (3) can be improved to
\begin{enumerate}
\item[(1')] every $(t,x)\notin J$ is a Lebesgue point;
\item[(3')] $\mu_\eta= ((\eta(u^+),q(u^+)) - (\eta(u^-),q(u^-)))\cdot \mathbf n \llcorner J$.
\end{enumerate}
In \cite{Silvestre_CL} the author considered the case of entropy solutions to \eqref{E_CL_intro} with a power-type nonlinearity assumption on $f$ 
(see Assumption \ref{A_f}): in this setting he proved that every point $(t,x)\notin J$ is actually a continuity point, providing therefore a 
positive answer about (1'). Moreover he showed that $\mu=\mu\llcorner \bar J$, where $\bar J$ denotes the topological closure of $J$, 
partially answering about (3').

It is also worth to mention that both questions have affirmative answer for entropy solutions in one space dimension. 
Property (1') is valid under the milder nonlinearity assumption that $\{v:f''(v)\ne 0\}$ is dense in $\R$: see \cite{BM_structure}, where it is also
proved that Property (3') holds for general smooth fluxes, see also \cite{DLR_dissipation} for an earlier proof in the case of fluxes with 
finitely many nondegenerate inflection points.
Moreover in \cite{Dafermos_continuous} it is proved that $\mu_\eta$ vanishes for continuous weak solutions, without a priori requiring that they are entropic.

Entropy solutions are of course the most relevant in the theory of scalar conservation laws, nevertheless weak solutions that are not entropic
arise naturally together with \eqref{E_fin_intro} in certain situations: in \cite{bertini_al, Mariani_large} they arise in the study of large deviations for stochastic  conservation laws.
We refer to \cite{LO_Burgers} and the reference therein for more motivations. 

Property (1') has been addressed for the first time out of the entropic setting in \cite{LO_Burgers} for the Burgers equation: 
\begin{equation}\label{E_Burgers_intro}
\partial_t u + \partial_x \left(\frac{u^2}{2}\right) =0.
\end{equation}
The authors proved that the set of non Lebesgue points of any weak solution to \eqref{E_Burgers_intro} with finite entropy production has Hausdorff dimension at most 1. It is also remarkable that they only need \eqref{E_fin_intro} to be satisfied for the entropy $\bar \eta(u)=u^2/2$.
On the other hand their argument relies on the link between Burgers equation and Hamilton-Jacobi equation, therefore it seems limited to the
one dimensional case.

In this work we obtain the analogous result in the general setting, thus providing a partial answer about (1') in the affermative.

\begin{theorem}\label{T_main_intro}
Let $u$ be a bounded weak solution to \eqref{E_CL_intro} with finite entropy production. Under a power-like nonlinearity assumption on
the flux $f$,
the set of non Lebesgue points of $u$ has Hausdorff dimension at most $d$.
\end{theorem}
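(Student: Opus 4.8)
The plan is to argue entirely within the kinetic formulation. Writing $\chi(v,w):=\mathbbm 1_{\{0<v<w\}}-\mathbbm 1_{\{w<v<0\}}$ for the kinetic function, the finite entropy production hypothesis is equivalent to the existence of a finite signed measure $m$ on $(0,T)\times\R^d\times\R$ with
\begin{equation*}
\partial_t \chi(v,u)+F'(v)\cdot\nabla_x \chi(v,u)=\partial_v m,
\end{equation*}
so that $\mu_\eta=-\int_\R \eta''(v)\,m(\,\cdot\,,\mathrm dv)$ for every entropy $\eta$. The power-like assumption on $f$ enters as a quantitative non-degeneracy of the symbol, namely $\big|\{v\in[-M,M]:\ |\tau+\xi\cdot F'(v)|\le\delta\}\big|\le C\delta^\alpha$ uniformly for $(\tau,\xi)\in S^{d}$, which is exactly the hypothesis required by velocity-averaging lemmas.

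The core of the argument is a blow-up analysis at a candidate non-Lebesgue point $(t_0,x_0)$. Rescaling $u_r(s,y):=u(t_0+rs,x_0+ry)$ preserves the kinetic identity with a rescaled measure whose total variation on the unit cylinder equals $r^{-d}\,|m|\big(B_r(t_0,x_0)\times\R\big)$. I would therefore single out the upper density
\begin{equation*}
\theta^d(t_0,x_0):=\limsup_{r\to0^+} r^{-d}\,|m|\big(B_r(t_0,x_0)\times\R\big),
\end{equation*}
and split according to whether it vanishes. On $\{\theta^d>0\}$ I invoke the elementary density bound for the finite measure $\pi_\#|m|$ on $\R^{1+d}$ (via the Besicovitch covering theorem): the set where a finite measure has strictly positive upper $d$-density is $\sigma$-finite for $\mathcal H^{d}$, hence of Hausdorff dimension at most $d$, which already accounts for the jump set $J$.

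The remaining, and main, obstacle is the regime $\theta^d(t_0,x_0)=0$. Here the right-hand side of the rescaled kinetic equation tends to zero, and the averaging lemma, combined with the non-degeneracy exponent $\alpha$, should upgrade the weak-$*$ compactness of $\chi(v,u_r)$ to strong $L^1_{\loc}$ compactness of $u_r$; in particular the tangent function satisfies $\chi(v,u_\infty)$ rather than a genuinely oscillatory Young-measure limit, and solves the homogeneous kinetic equation
\begin{equation*}
\partial_s\chi(v,u_\infty)+F'(v)\cdot\nabla_y\chi(v,u_\infty)=0
\end{equation*}
on all of $\R^{1+d}\times\R$. The delicate point is the rigidity: I must show that such dissipation-free tangents force $(t_0,x_0)$ into a set of dimension at most $d$. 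For a non-degenerate flux the free kinetic equation transports each level $\{v\}$ with constant velocity $F'(v)$, and the non-degeneracy forbids these transports from being simultaneously compatible with a single bounded profile unless $u_\infty$ is constant (so that $(t_0,x_0)$ is an honest Lebesgue point) or is of self-similar, rarefaction-type structure. Controlling these self-similar dissipation-free tangents is the multidimensional replacement for the Hamilton--Jacobi argument used in the Burgers case of \cite{LO_Burgers}, and I expect it to be the hardest step; the goal is a separate dimensional estimate showing that such exceptional centers cannot fill more than a $d$-dimensional set.

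Putting the two regimes together, every non-Lebesgue point lies either in $\{\theta^d>0\}$ or in the exceptional tangent set, both of Hausdorff dimension at most $d$, which yields the claim. The two places where I anticipate the real work are (i) the rigorous identification of the tangent as a kinetic function, where the power-like exponent must be quantitatively strong enough for the averaging lemma to give strong compactness, and (ii) the classification of dissipation-free self-similar tangents, which is precisely the feature that is invisible in the entropic setting of \cite{Silvestre_CL} and that forces the weaker dimensional conclusion here.
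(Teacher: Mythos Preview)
Your blow-up strategy has a genuine gap at the step ``$u_\infty$ constant, so that $(t_0,x_0)$ is an honest Lebesgue point''. Strong $L^1$-compactness of the rescalings plus a constant tangent only gives \emph{vanishing mean oscillation}: along different sequences $r_k\to 0$ the limits $u_\infty$ may be different constants, and this is exactly the content of the De~Lellis--Otto--Westdickenberg result that $J^c=\{\theta^d=0\}$ consists of VMO points. The paper's explicit example at the end of Section~\ref{S_structure} (a quasi-solution to Burgers built from $u_0(x)=\sin(\log|\log|x||)$) exhibits a point with $\theta^d=0$ --- hence every blow-up is a constant --- that is nevertheless not a Lebesgue point. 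So your dichotomy ``constant tangent versus self-similar tangent'' does not separate Lebesgue from non-Lebesgue, and the difficulty you flag as (ii) is mislocated: the obstruction is already present for constant tangents, and no classification of dissipation-free profiles can resolve it.

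The paper avoids blow-up entirely. The key move is to refine the density condition: instead of $\{\theta^d=0\}$ one works with $\tilde J^c=\{\nu(B_r)=O(r^{d+\beta})\mbox{ for some }\beta>0\}$, whose complement still has Hausdorff dimension at most $d$ by a Vitali covering argument. The substance is a direct quantitative estimate showing $\tilde J^c\subset R$: Theorem~\ref{T_Wass1} bounds $\int\phi(\chi_{E_u}-\chi_{\FT(E_u,\bar s)})$ by $\nu$ times derivatives of $\phi$; Lemma~\ref{L_nonlinear} uses the power-like assumption to pick $v_1,\dots,v_d$ with $f'(v_i)$ spanning $\R^d$ quantitatively; Lemma~\ref{L_overlapping_cyl} and Proposition~\ref{P_Kant} combine these to control the maximal oscillation of averages $\bar h_r(\bar x)$ by a power of $\nu(B_{Cr})r^{1-d}$. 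Under the $\tilde J^c$ decay this yields $\bar h_r(\bar x)=O(r^{\gamma'})$, and Lemma~\ref{L_polynomial} shows that VMO together with a polynomial rate on $\bar h_r$ forces $(u)_{B_r(\bar x)}$ to be Cauchy --- precisely the passage from VMO to Lebesgue that the tangent argument cannot supply.
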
 
We remark here that this theorem as well as the result in \cite{DLW_structure} is proved in the more general setting of quasi-solutions (see Section \ref{S_structure}).

The proof of Theorem \ref{T_main_intro} is based on a new estimate which relates the solution to the free motion in the kinetic formulation. 
As another byproduct of this estimate we get the following theorem, which extends the case of bounded entropy solutions studied in \cite{BBM_multid}.

\begin{theorem}\label{T_lagrangian_intro}
Let $u$ be a bounded nonnegative weak solution with finite entropy production to \eqref{E_CL_intro}, with $u_0\in L^1(\R^d)$. Then $u$ admits 
a \emph{Lagrangian representation}, i.e. there exists a nonnegative bounded measure $\omega$ on
\begin{equation*}
\Gamma:=\{\gamma=(\gamma^1,\gamma^2) \in \BV([0,T),\R^d\times [0,+\infty)): \gamma^1 \text{ is Lipschitz}\}
\end{equation*}
which satisfies the following conditions:
\begin{enumerate}
\item for every $t \in [0,T)$ it holds
\begin{equation*}
(e_t)_\sharp \omega = \mathscr L^{d+1}\llcorner \{(x,v)\in\R^d\times [0,+\infty): v<u(t,x)\},
\end{equation*}
where $e_t(\gamma)=\lim_{s\to t^+}\gamma(s)$ is the evaluation map;
\item the measure $\omega$ is concentrated on characteristics, i.e. on curves $\gamma\in \Gamma$ such that
\begin{equation*}
\dot\gamma^1(t)=f'(\gamma^2(t)) \quad \mbox{for a.e. }t\in (0,T);
\end{equation*}
\item the following integral estimate holds:
\begin{equation*}
\int_\Gamma \TV_{[0,T)} \gamma^2 d\omega <\infty.
\end{equation*}
\end{enumerate}
\end{theorem}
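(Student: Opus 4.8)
The plan is to realize $\omega$ through the kinetic formulation together with an application of the superposition principle. Introduce the kinetic (indicator) function $\chi(t,x,v):=\mathbbm{1}_{\{0\le v<u(t,x)\}}$, which for nonnegative $u$ is supported in $\{v\ge 0\}$ and, as a consequence of the finite entropy production hypothesis, solves the kinetic equation
\begin{equation*}
\partial_t \chi + f'(v)\cdot \nabla_x \chi = \partial_v m \quad \text{in } \D'((0,T)\times\R^d\times\R),
\end{equation*}
for a suitable finite measure $m$. The geometric content of the theorem is that $\chi(t,\cdot,\cdot)\,\mathscr L^{d+1}$ is, at each time, the image under the evaluation map of a measure on trajectories in the $(x,v)$-space that move according to the characteristic field $(f'(v),0)$. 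The decisive structural observation is that this field does not act on the $v$-variable and is $x$-independent: for frozen $v$ the motion in $x$ is the uniform translation $x\mapsto x+tf'(v)$, so that the entire variation of the height coordinate $\gamma^2$ is produced by the source $\partial_v m$. This is precisely what links $\TV_{[0,T)}\gamma^2$ to the total mass of $m$ and explains the estimate in item (3). One also notes that, since $u$ is nonnegative and $u_0\in L^1(\R^d)$, mass is globally conserved, $\|u(t)\|_{L^1}=\|u_0\|_{L^1}$, so that the pushforward identity in item (1) forces $\omega(\Gamma)=\|u_0\|_{L^1(\R^d)}$ and every trajectory persists on all of $[0,T)$.

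Next I would construct $\omega$ by approximation. Mollifying $u$ in the space variables yields $\chi^\e$ solving the regularized kinetic equation with a smooth right-hand side $\partial_v m^\e$, and for the smooth problem one can build a representation $\omega^\e$ by following the now classical characteristics, with $\gamma^1$ evolving by $\dot\gamma^1=f'(\gamma^2)$ and $\gamma^2$ transported by the smooth source. On $\omega^\e$ I would establish the three a priori bounds that should survive the limit: the total mass is controlled, $\omega^\e(\Gamma)=\|u_0\|_{L^1(\R^d)}$, because $\chi$ is an indicator and $u$ is nonnegative; the $x$-component is uniformly Lipschitz, $\Lip(\gamma^1)\le \sup_{[0,\|u\|_\infty]}|f'|$; and, most importantly,
\begin{equation*}
\int_\Gamma \TV_{[0,T)}\gamma^2 \, d\omega^\e \;\lesssim\; |m^\e|\big((0,T)\times\R^d\times\R\big),
\end{equation*}
uniformly in $\e$, which is where the estimate comparing $u$ with the free motion in the kinetic formulation (the same estimate underlying Theorem \ref{T_main_intro}) enters.

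With these uniform bounds in hand I would pass to the limit by compactness. The Lipschitz bound on $\gamma^1$ and the total variation bound on $\gamma^2$ give tightness of $\{\omega^\e\}$ on $\Gamma$ via Arzel\`a--Ascoli for the first component and Helly's theorem for the second, so that a subsequence converges weakly-$*$ to some nonnegative bounded $\omega$. I would then verify the three properties: item (1) follows from the weak time-continuity of $\chi(t,\cdot,\cdot)\,\mathscr L^{d+1}$ together with the convergence of the evaluation maps $e_t$, taking the right-continuous representative $e_t(\gamma)=\lim_{s\to t^+}\gamma(s)$; item (2) is obtained by passing the characteristic ODE to the limit, using that $f'$ is continuous and that $\gamma^2$ converges along $\omega$-a.e. curve; and item (3) follows from the lower semicontinuity of $\gamma\mapsto \TV_{[0,T)}\gamma^2$ with respect to the convergence at hand, which turns the uniform bound of the previous step into $\int_\Gamma \TV_{[0,T)}\gamma^2\,d\omega<\infty$.

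The main obstacle I anticipate is the control of the height coordinate when $m$ carries no sign. For entropy solutions $m\ge 0$ and the $v$-motion is monotone (mass only moves downward), so the trajectories are organized by a simple ordering; here both signs occur, and the redistribution of mass in $v$ encoded by $\partial_v m$ must be arranged into a genuine measure on curves whose total variation in $v$ is still bounded by $|m|$, and this bound must be robust under mollification and under the weak-$*$ limit. Making the approximate construction quantitatively compatible with the free-motion estimate, so that the $\TV$ bound is uniform in $\e$ and lower semicontinuous in the limit, is the technical heart of the argument.
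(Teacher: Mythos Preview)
Your proposal contains a genuine gap at the construction of the approximate representation $\omega^\e$. The kinetic equation
\[
\partial_t \chi + f'(v)\cdot\nabla_x\chi = \partial_v m
\]
is \emph{not} a transport equation in $(x,v)$: the right-hand side is a source term, not the divergence of a flux, and there is no vector field acting on the $v$-variable. Consequently there is no ODE for $\gamma^2$, and the phrase ``$\gamma^2$ transported by the smooth source'' has no meaning --- mollifying $m$ does not produce a velocity in $v$, only a smooth source. The superposition principle you invoke applies to continuity equations $\partial_t\rho+\div(b\rho)=0$ and yields curves solving $\dot\gamma=b(\gamma)$; here no such $b$ exists in the $v$-direction. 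A second, related difficulty is that after spatial mollification $\chi^\e$ is no longer a characteristic function, so the identity $(e_t)_\sharp\omega^\e=\chi^\e\,\mathscr L^{d+1}$ cannot be read as a subgraph constraint, and the link with a function $u^\e$ is lost.

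The paper circumvents both issues by a completely different approximation. There is no mollification; instead one discretizes time with step $\bar s_n=2^{-n}T$ and on each subinterval lets $\gamma$ evolve by the free transport $(\dot\gamma^1,\dot\gamma^2)=(f'(\gamma^2),0)$, so that $\gamma^2$ is piecewise constant. At each discrete time $k\bar s_n$ the subgraph $\FT(E_{u((k-1)\bar s_n)},\bar s_n)$ is matched to the true subgraph $E_{u(k\bar s_n)}$ by an $L^1$-optimal transport map $T_k$ with respect to an anisotropic distance $d_{L_n}(\,(x_1,v_1),(x_2,v_2)\,)=L_n|x_1-x_2|+|v_1-v_2|$, and this map \emph{defines} the jump of $\gamma^2$. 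The core estimate of the paper (Theorem~\ref{T_Wass1}) bounds the Kantorovich dual cost of this matching by $(1+1/L_n)\,\nu((\,(k-1)\bar s_n,k\bar s_n)\times\R^d)$; choosing $L_n\to\infty$ forces the horizontal defect $\int\sup_t|\gamma^1(t)-\gamma^1(0)-\int_0^t f'(\gamma^2)|\,d\omega_n\to 0$ while keeping $\int\TV\gamma^2\,d\omega_n\le (1+1/L_n)\,\nu$ uniformly bounded. Tightness and the limit then proceed essentially as you outline. The point you are missing is that the $v$-motion is \emph{constructed} via optimal transport between measures, not obtained from an ODE; this is precisely how the paper turns the weak estimate into a Lagrangian object.
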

The Lagrangian representation is of course strictly related to the kinetic formulation and it is also reminiscent of the superposition principle
for nonnegative measure valued solutions to the continuity equation \cite{Ambrosio_transport}.
By means of this notion we provide a representation of the entropy production measure $\mu_\eta$.
We notice that the existence of a Lagrangian representation, even if in a different form, has been a crucial ingredient to prove
the optimal regularizing effect \cite{m_regularity} and the structure of entropy solutions \cite{BM_structure} in the case of a single space dimension.
Moreover it is the crucial ingredient to prove in \cite{BBM_multid} that $\mu_\eta$ vanishes for bounded and continuous entropy solutions
(see also \cite{Silvestre_CL} for a similar proof of this result).

{\bf{Structure of the paper.}} In Section \ref{S_prel} we introduce the notion of quasi solutions and the corresponding kinetic formulation.
We moreover recall a few results from the theory of $L^1$ optimal transport that will be relevant in the construction of the Lagrangian
representation.
The short Section \ref{S_estimate} is devoted to the proof of the kinetic estimate. As a first consequence we deduce Theorem \ref{T_main_intro}
in Section \ref{S_structure}. Theorem \ref{T_lagrangian_intro} is proved instead in Section \ref{S_lagrangian}.

\section{Preliminaries and setting}\label{S_prel}
In this section we introduce the setting of quasi-solutions following \cite{DLW_structure} and we provide the notion of kinetic formulation.
Moreover we recall a few facts from $L^1$-Optimal Transport that will be useful in the second part of the work.
\subsection{Quasi-solutions}
We consider flux functions $f\in C^2(\R,\R^d)$.
\begin{definition}
Let $\E_+$ denote the set of all $q\in C(\R,\R^d)$ for which there exists an $\eta$ with 
\begin{equation*}
q'(v)=\eta'(v)f'(v) \qquad \mbox{and}\qquad \eta''(v)\ge 0 \quad \mbox{in} \quad \D'_v.
\end{equation*}
We call a measurable function $u:\R^n\to (0,1)$ a \emph{quasi-solution} if
\begin{equation}\label{E_def_quasi}
\mu_q:=-\div_xq(u) \,\in \, \M(\R^n) \quad \mbox{for all }q\in \E_+.
\end{equation}
\end{definition}

\begin{remark}
We observe that we can recover \eqref{E_fin_intro} from \eqref{E_def_quasi} considering $f=(\mathbf{I},F)$. Conversely quasi-solutions
can be interpreted as time-independent functions satisfying \eqref{E_fin_intro}.
Notice moreover that \eqref{E_def_quasi} does not imply $\div_x f(u)=0$. In particular this setting is more general than \eqref{E_CL_intro}, \eqref{E_fin_intro} and it allows to consider suitable sources.
Finally observe that consider quasi-solutions taking values in $(0,1)$ is not restrictive, up to  translations and rescaling of the flux $f$.
\end{remark}

In the following Proposition we introduce the kinetic formulation for quasi-solutions.

\begin{proposition}\label{P_kin}
Let $u$ be a quasi-solution and let $\chi:\R^n\times [0,+\infty)\to \{0,1\}$ be
\begin{equation*}
\chi(x,v):=
\begin{cases}
1 & \mbox{if }0<v\le u(x), \\
0 & \mbox{otherwise}.
\end{cases}
\end{equation*}
Then there exists a locally finite Radon measure $\mu\in \M(\R_v\times \R^d_x)$ such that
\begin{equation*}
f'(v)\cdot \nabla_x\chi(v,x)=\partial_v\mu \quad \mbox{in }\D'_{v,x}.
\end{equation*}
\end{proposition}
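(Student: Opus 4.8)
The plan is to construct $\mu$ explicitly as a primitive in $v$ of the source $f'(v)\cdot\nabla_x\chi$ and then to verify that this primitive is represented by a locally finite measure. Concretely, I would set
\[
\mu:=\div_x\big(f(u\wedge v)\big)\qquad\text{in }\D'_{v,x},
\]
which is the natural candidate since, for $v>0$, $\int_0^v f'(w)\chi(x,w)\,dw=f(u(x)\wedge v)-f(0)$. Differentiating in $v$ and using that $\partial_v f(u\wedge v)=f'(v)\mathbf 1_{\{v<u\}}$ coincides with $f'(v)\chi$ up to an $x$-independent term (which is killed by $\div_x$), one checks directly that $\partial_v\mu=f'(v)\cdot\nabla_x\chi$ in $\D'_{v,x}$. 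This identity is a routine distributional computation and determines $\mu$ up to an additive distribution independent of $v$; the chosen representative vanishes for $v\le 0$ and equals the fixed measure $\div_x f(u)$ for $v\ge 1$, so all the content is concentrated in $v\in(0,1)$.

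The second step is to recognize each $v$-slice as an entropy production. For fixed $v$ the Kruzhkov entropy $\eta_v(w):=(v-w)_+$ is convex, and the associated flux $q_v(w):=-f(w\wedge v)$ is continuous with $q_v'=\eta_v'f'$, so $q_v\in\E_+$. By the very definition of quasi-solution, $\mu_{q_v}=-\div_x q_v(u)=\div_x f(u\wedge v)$ is then a locally finite Radon measure on $\R^d_x$; in other words, for every $v$ the distribution $\mu$ restricts to a genuine measure in $x$. Moreover the map $v\mapsto\mu_{q_v}$ is weak-$*$ continuous, because $f(u\wedge v)\to f(u\wedge v_0)$ in $L^1_{\loc}$ as $v\to v_0$ by dominated convergence.

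It remains to upgrade this slice-wise information to local finiteness of the joint measure on $\R_v\times\R^d_x$. Testing $\mu$ against $\phi\in C_c^\infty$ gives $\langle\mu,\phi\rangle=\int_{\R}\langle\mu_{q_v},\phi(v,\cdot)\rangle\,dv$, so by the Riesz representation theorem the whole problem reduces to the a priori estimate
\[
\int_0^1 |\mu_{q_v}|(B_R)\,dv<\infty\qquad\text{for every }R>0,
\]
equivalently to the statement that $v\mapsto\mu_{q_v}$ has locally bounded variation as a measure-valued map. I expect this to be the main obstacle: the fluxes $q_v$ form only a convex cone, so the productions $\mu_{q_v}$ are genuinely signed and their masses may a priori oscillate in $v$; weak-$*$ continuity of the slices alone does not prevent $v\mapsto|\mu_{q_v}|(B_R)$ from being non-integrable, and no single convex entropy controls the full variation in the absence of a sign condition on $\mu_\eta$.

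To overcome this I would argue by approximation. For $u_\e:=u*\rho_\e$ the corresponding object $\mu^\e:=\div_x f(u_\e\wedge v)$ has the explicit density $\mathbf 1_{\{u_\e<v\}}\,\div_x f(u_\e)$ with respect to $\mathscr L^{1}_v\otimes\mathscr L^{d}_x$, whence
\[
|\mu^\e|\big((0,1)\times B_R\big)=\int_{B_R}(1-u_\e)\,|\div_x f(u_\e)|\,dx\le\int_{B_R}|\div_x f(u_\e)|\,dx.
\]
Since $\mu^\e\to\mu$ in $\D'$, weak-$*$ lower semicontinuity of the total variation then yields the desired bound on $|\mu|\big((0,1)\times B_R\big)$ provided one establishes the uniform estimate $\sup_\e\int_{B_R}|\div_x f(u_\e)|\,dx<\infty$. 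This uniform total-variation bound is the technical heart of the argument: it must exploit that it is the specific combination $\div_x f(u_\e)=f'(u_\e)\cdot\nabla u_\e$ (not the full gradient $\nabla u_\e$, which need not be controlled) that is regularized, and it is here that the finiteness of the entropy productions for the whole cone $\E_+$ enters quantitatively, along the lines of the structure theory of \cite{DLW_structure}.
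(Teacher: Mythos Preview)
The paper does not give a proof of this proposition; it is quoted as background from the kinetic-formulation literature (essentially \cite{LPT_kinetic, DLW_structure}). So let me assess your argument on its own and against the standard route.

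Your candidate $\mu=\div_x f(u\wedge v)$ and the identification of each $v$-slice with the Kru\v{z}kov production $\mu_{q_v}$ are correct and are exactly how the standard proof begins. The issue is your third step. You reduce local finiteness of $\mu$ to the uniform bound
\[
\sup_{\e>0}\int_{B_R}\big|\div_x f(u_\e)\big|\,dx<\infty,\qquad u_\e:=u*\rho_\e,
\]
and then defer this to ``the structure theory of \cite{DLW_structure}''. This bound, however, is not what that structure theory gives, and in general there is no reason for it to hold. Knowing $\div_x f(u)\in\M_{\loc}$ controls $\div_x\big(f(u)*\rho_\e\big)=(\div_x f(u))*\rho_\e$ in $L^1_{\loc}$, but you need $\div_x f(u*\rho_\e)$, and the commutator $f(u*\rho_\e)-f(u)*\rho_\e$ is not bounded in $W^{1,1}_{\loc}$ without extra regularity of $u$ (say $u\in\BV$), which you do not have. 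Put differently, $u_\e$ need not be a quasi-solution, so none of the hypotheses transfer to it. The mollification detour therefore creates a new difficulty rather than resolving one.

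The clean argument bypasses approximation entirely and stays at the level of the slices you already identified. Observe that $\E:=\E_+-\E_+$ is a linear space on which $q\mapsto\mu_q$ is linear, and that whenever $q_k\to q$ in $C^0$ and $\mu_{q_k}\rightharpoonup m$ in $\M_{\loc}$ one has $m=\mu_q$ (since $q_k(u)\to q(u)$ in $L^1_{\loc}$). By the closed graph theorem this map is bounded: for every compact $K\subset\R^d$ there is $C_K$ with $|\mu_q|(K)\le C_K\|q\|_{C^0([0,1])}$ for all $q\in\E$. Since the Kru\v{z}kov fluxes $\{q_v:v\in(0,1)\}$ are uniformly bounded in $C^0$, this gives $\sup_{v\in(0,1)}|\mu_{q_v}|(K)\le C_K$, hence
\[
\int_0^1|\mu_{q_v}|(K)\,dv\le C_K<\infty,
\]
which is precisely the estimate you were after. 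Together with the weak-$*$ continuity of $v\mapsto\mu_{q_v}$ that you already noted, this yields $\mu\in\M_{\loc}(\R_v\times\R^d_x)$ directly.
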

In the following we will denote by $\nu$ the $x$- marginal of the total variation $|\mu|$ of $\mu$:
\begin{equation*}
\nu(A):=|\mu|(\R\times A) \quad \mbox{for every Borel set }A\subset \R^n.
\end{equation*}

\subsection{Duality for $L^1$ optimal transport}
\begin{definition}
Let $(X,d)$ be a Polish metric space and let $\mu_1,\mu_2$ be two probability measures on $X$. The Wasserstein distance of order 1 between
$\mu_1$ and $\mu_2$ is defined by
\begin{equation}\label{E_W1}
W_1(\mu_1,\mu_2):=\inf_{\pi\in \Pi(\mu_1,\mu_2)}\int_Xd(x,y)d\pi(x,y),
\end{equation}
where $\Pi(\mu_1,\mu_2)$ is the set of transport plans from $\mu_1$ to $\mu_2$, i.e.
\begin{equation*}
\Pi(\mu_1,\mu_2):=\{\omega\in \mathcal P(X^2): {\pi_1}_\sharp \omega = \mu_1, {\pi_2}_\sharp \omega = \mu_2 \},
\end{equation*}
denoting by $\pi_1,\pi_2:X^2 \to X$ the two natural projections.
\end{definition}

Notice that $W_1$ can take value $+\infty$.

In order to prove the existence of a Lagrangian representation for weak solutions with finite entropy production the following duality formula will be useful (see for example \cite{V_oldnew}).
\begin{proposition}\label{P_duality}
For any $\mu_1,\mu_2 \in \mathcal P(X)$, it hods
\begin{equation*}
W_1(\mu_1,\mu_2)= \sup_{ \phi \in L^1(\mu_1), \|\phi\|_\Lip \le 1} \left( \int_X\phi d\mu_1-\int_X \phi d\mu_2 \right).
\end{equation*}
\end{proposition}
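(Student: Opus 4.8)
The plan is to prove the two inequalities separately; the easy one uses only the transport pairing, while the reverse inequality is the genuine content and rests on Kantorovich duality together with the special structure of the metric cost $c(x,y)=d(x,y)$.

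First I would prove that $W_1(\mu_1,\mu_2)$ dominates the supremum. For any admissible plan $\pi\in\Pi(\mu_1,\mu_2)$ and any $\phi$ with $\|\phi\|_\Lip\le 1$, since $\pi$ has marginals $\mu_1,\mu_2$ one has
\begin{equation*}
\int_X\phi\,d\mu_1-\int_X\phi\,d\mu_2=\int_{X\times X}\bigl(\phi(x)-\phi(y)\bigr)\,d\pi(x,y)\le\int_{X\times X}d(x,y)\,d\pi(x,y),
\end{equation*}
where the inequality is just $\phi(x)-\phi(y)\le d(x,y)$. Taking the infimum over $\pi$ and then the supremum over $\phi$ gives $\sup(\cdots)\le W_1(\mu_1,\mu_2)$.

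For the reverse inequality I would invoke the general Kantorovich duality theorem for the lower semicontinuous cost $c=d$ on the Polish space $X$ (this is where I would cite \cite{V_oldnew}), which yields
\begin{equation*}
W_1(\mu_1,\mu_2)=\sup\Bigl\{\int_X\phi\,d\mu_1+\int_X\psi\,d\mu_2:\ \phi,\psi\in C_b(X),\ \phi(x)+\psi(y)\le d(x,y)\Bigr\}.
\end{equation*}
The point is then to reduce every admissible pair $(\phi,\psi)$ to one of the form $(\varphi,-\varphi)$ with $\varphi$ $1$-Lipschitz, without decreasing the objective. Given $(\phi,\psi)$ I would set $\varphi(x):=\inf_{y\in X}\bigl(d(x,y)-\psi(y)\bigr)$; as an infimum of the $1$-Lipschitz maps $x\mapsto d(x,y)-\psi(y)$ it is itself $1$-Lipschitz, it satisfies $\varphi\ge\phi$ by admissibility, and the pair $(\varphi,\psi)$ is still admissible. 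Replacing $\psi$ in turn by $\varphi^c(y):=\inf_{x}\bigl(d(x,y)-\varphi(x)\bigr)$, the $1$-Lipschitz property of $\varphi$ gives the key identity $\varphi^c=-\varphi$ (the bound $\le$ comes from taking $x=y$, the bound $\ge$ from $\varphi(x)\le\varphi(y)+d(x,y)$), and $\varphi^c\ge\psi$. Hence the objective only increases and we are left with $\int\varphi\,d\mu_1-\int\varphi\,d\mu_2$ for a $1$-Lipschitz $\varphi$, which proves $W_1(\mu_1,\mu_2)\le\sup(\cdots)$.

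It remains to address two routine technical points, which I would keep brief: the competitors in the statement are $\phi\in L^1(\mu_1)$ with $\|\phi\|_\Lip\le 1$ rather than $C_b(X)$, but a bounded $1$-Lipschitz function is admissible in both formulations, and a general $1$-Lipschitz $\phi\in L^1(\mu_1)$ is recovered by truncation $\phi_N:=(\phi\wedge N)\vee(-N)$ together with dominated convergence, so the two suprema coincide; and the case $W_1(\mu_1,\mu_2)=+\infty$ is consistent since then the supremum is also $+\infty$. The main obstacle is the general Kantorovich duality itself, whose proof requires a minimax or Hahn--Banach argument and the existence of an optimal plan (via Prokhorov tightness of $\Pi(\mu_1,\mu_2)$ and lower semicontinuity of the cost); since this is a standard result I would simply quote it, so that the only computation genuinely needed here is the $c$-transform identity $\varphi^c=-\varphi$ special to the metric cost.
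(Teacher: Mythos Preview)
Your argument is correct and is the standard proof of the Kantorovich--Rubinstein formula; the paper itself gives no proof but simply quotes the result from \cite{V_oldnew}, so there is nothing to compare against beyond noting that your sketch is precisely the argument one finds there.
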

 
The next theorem from \cite{BD_L1map} provides the existence of an $L^1$-optimal map with respect to quite general distances on $\R^N$.
\begin{theorem}\label{T_opt_map}
Let $X=\R^N$ with $N\in\N$ be the euclidean space equipped with the distance induced by a convex norm $|\cdot|_{D*}$. Let $\mu_1,\mu_2\in \mathcal P(\R^N)$ be such that 
$\mu_1\ll \mathscr L^N$ and the infimum in \eqref{E_W1} is finite. Then there exists an optimal plan $\pi$ in \eqref{E_W1} induced by a map, i.e. there exists a measurable map $T:\R^N\to \R^N$
such that $T_\sharp \mu_1=\mu_2$ and
\begin{equation*}
W_1(\mu_1,\mu_2) = \int_X|T(x)-x|_{D*}d\mu_1(x).
\end{equation*}
\end{theorem}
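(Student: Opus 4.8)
The plan is to prove this via the classical reduction of the Monge problem with a norm cost to a family of one-dimensional optimal transport problems along \emph{transport rays}. Here the cost is $c(x,y)=|x-y|_{D*}$, and the main difficulty is that this cost is not strictly convex, so no Brenier-type argument applies directly and one cannot expect the optimal plan to be unique.

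First I would use the Kantorovich duality of Proposition \ref{P_duality} to produce an optimal potential. Since the infimum in \eqref{E_W1} is finite, a maximizing sequence of $1$-Lipschitz functions (for $|\cdot|_{D*}$), normalized by an additive constant, is locally equibounded and equi-Lipschitz, so by Arzel\`a--Ascoli and a diagonal argument one extracts a $1$-Lipschitz maximizer $\phi$. A complementary-slackness computation then shows that every optimal plan is concentrated on the contact set
\[
\Gamma:=\{(x,y)\in\R^N\times\R^N:\ \phi(x)-\phi(y)=|x-y|_{D*}\}.
\]
Because $\phi$ is $1$-Lipschitz, whenever $(x,y)\in\Gamma$ the function $\phi$ is affine with maximal slope along the whole segment $[x,y]$; this forces the transport to move mass along straight segments.

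Next I would develop the geometry of transport rays: a transport ray is a maximal nontrivial segment along which $\phi$ decreases at the maximal rate $1$. The rays of positive length cover $\mu_1$-almost all of $\R^N$ (their endpoints, where $\phi$ behaves nonextremally, form an $\mathscr L^N$-negligible set, hence $\mu_1$-negligible since $\mu_1\ll\mathscr L^N$), and two distinct rays meet only at endpoints. Disintegrating along the induced partition, I write $\mu_1=\int \mu_1^\alpha\,dm(\alpha)$ and $\mu_2=\int \mu_2^\alpha\,dm(\alpha)$, where each $\mu_i^\alpha$ is carried by a single ray $R_\alpha$, and on $R_\alpha$ the optimal plan reduces to a one-dimensional transport between $\mu_1^\alpha$ and $\mu_2^\alpha$.

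The crucial step, and the main obstacle, is to show that $\mu_1\ll\mathscr L^N$ forces each conditional measure $\mu_1^\alpha$ to be non-atomic, indeed absolutely continuous with respect to $\mathscr H^1\llcorner R_\alpha$. This is the classical \emph{Sudakov gap}: the naive disintegration needs this property, and for a general convex norm it is delicate, since the ray direction at a point need not be unique when the unit ball of $|\cdot|_{D*}$ has flat faces, so rays may branch. The content of \cite{BD_L1map} is precisely a change-of-variables/coarea argument exploiting the countably Lipschitz regularity of the ray-direction field to transfer the absolute continuity of $\mu_1$ on $\R^N$ down to $\mathscr H^1$ on the rays. Granting this, the non-atomicity of $\mu_1^\alpha$ lets me take on each ray the monotone rearrangement, i.e. the unique order-preserving map $T_\alpha$ pushing $\mu_1^\alpha$ onto $\mu_2^\alpha$ along $R_\alpha$. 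Setting $T(x):=T_\alpha(x)$ for $x\in R_\alpha$ and invoking measurability of the disintegration yields a measurable $T$ with $T_\sharp\mu_1=\mu_2$; the final verification that $\int_X|T(x)-x|_{D*}\,d\mu_1=\int\phi\,d\mu_1-\int\phi\,d\mu_2=W_1(\mu_1,\mu_2)$ shows that the plan induced by $T$ is optimal.
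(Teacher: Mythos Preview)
The paper does not prove Theorem~\ref{T_opt_map}: it is quoted from \cite{BD_L1map} as a black box in the preliminaries, with the explicit remark that even the weaker existence of an optimal \emph{plan} would suffice for the application in Section~\ref{S_lagrangian}. So there is no ``paper's own proof'' to compare against.

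That said, your sketch is a faithful outline of the Sudakov-type strategy carried out in \cite{BD_L1map}: duality gives a $1$-Lipschitz Kantorovich potential, optimal plans concentrate on the contact set, this induces a partition into transport rays, and the problem reduces to one-dimensional monotone rearrangement on each ray. You correctly identify the genuine difficulty --- showing that the conditional measures of $\mu_1$ on the rays are non-atomic (absolutely continuous with respect to $\mathscr H^1$) when the norm is not strictly convex --- and you correctly attribute its resolution to the cited reference rather than claiming it is routine. As a proof \emph{sketch} this is accurate; as a self-contained proof it of course defers the hard step to \cite{BD_L1map}, which is exactly what the paper itself does.
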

We remark that the result above would be much easier requiring only the existence of an optimal plan instead of an optimal map.
This would be sufficient for our goal but this result allows for a more transparent construction in Section \ref{S_lagrangian}.

\section{A weak estimate}\label{S_estimate}
Relying on the kinetic formulation we prove in this short section the main estimate of this work.
For every $u:\R^d\to [0,+\infty)$ we denote its subgraph by
\begin{equation*}
E_u:= \{(x,v)\in \R^d\times [ 0,+\infty): v\le u(x)\},
\end{equation*}
Then we consider the \emph{free-transport} operator introduced in \cite{Brenier_TC} to approximate entropy solutions:
\begin{equation*}
\FT (E,s):=\{(x,v)\in \R^d\times [0,+\infty): (x-f'(v)s,v)\in E\}.
\end{equation*}
Moreover let $\chi_E:\R^n\times [0, + \infty)\to \{0,1\}$ be the characteristic function of the set $E$:
\begin{equation*}
\chi_E(x,v):=\begin{cases}
1 & \mbox{if }(x,v)\in E, \\
0 & \mbox{otherwise}.
\end{cases}
\end{equation*}

For any $R>0$ and $\bar x\in \R^d$ denote by $B_R(\bar x)$ the ball of radius $R>0$ and center $\bar x$. Moreover we set $\pi_x:\R^{d}\times [0,+\infty)\to \R^d$ the first projection.

\begin{theorem}\label{T_Wass1}
Let $u$ be a quasi-solution and $\bar s>0$.
Let $\phi \in C^1_c(\R^d\times (0,+\infty))$ be such that $\pi_x(\supp \phi)\subset B_R(\bar x)$. Then 
\begin{equation}\label{E_main}
\int_{\R^d\times [0,+\infty)} \phi(x,v)(\chi_{E_u}-\chi_{\FT(E_u,\bar s)})dxdv \le \left(\bar s \|\partial_v\phi\|_{L^\infty} + \frac{\bar s^2}{2}\|f''\|_{L^\infty}\|\nabla_x \phi\|_{L^\infty}\right)\nu(B_{R+ \|f'\|_\infty\bar s}(\bar x)).
\end{equation}
\end{theorem}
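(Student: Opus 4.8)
The plan is to view the left-hand side of \eqref{E_main} as the increment over $[0,\bar s]$ of the functional $I(s):=\int_{\R^d\times[0,+\infty)}\phi\,\chi_{\FT(E_u,s)}\,dx\,dv$, to differentiate in the transport time $s$, and to let the kinetic identity convert the spatial transport into the measure $\mu$. Since $\chi_{\FT(E_u,s)}(x,v)=\chi_{E_u}(x-f'(v)s,v)$, the substitution $x\mapsto x+f'(v)s$ at fixed $v$ (which has unit Jacobian) gives $I(s)=\int\phi(x+f'(v)s,v)\,\chi_{E_u}(x,v)\,dx\,dv$. Writing $\Phi_s(x,v):=\phi(x+f'(v)s,v)$ and differentiating under the integral sign yields $I'(s)=\int f'(v)\cdot\nabla_x\Phi_s\,\chi_{E_u}\,dx\,dv$, because $\partial_s\Phi_s=f'(v)\cdot\nabla_x\Phi_s$.

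The heart of the argument is to recognize $I'(s)$ as the pairing appearing in Proposition \ref{P_kin}. Testing the identity $f'(v)\cdot\nabla_x\chi_{E_u}=\partial_v\mu$ against $\Phi_s$ and integrating by parts in $x$ (using that $f'(v)$ is independent of $x$, so that $\nabla_x\cdot(f'(v)\Phi_s)=f'(v)\cdot\nabla_x\Phi_s$) gives $\int\chi_{E_u}\,f'(v)\cdot\nabla_x\Phi_s\,dx\,dv=\int\partial_v\Phi_s\,d\mu$, hence $I'(s)=\int\partial_v\Phi_s\,d\mu$. A chain-rule computation then gives the pointwise identity $\partial_v\Phi_s(x,v)=\partial_v\phi(x+f'(v)s,v)+s\,f''(v)\cdot\nabla_x\phi(x+f'(v)s,v)$, from which $|\partial_v\Phi_s|\le\|\partial_v\phi\|_{L^\infty}+s\|f''\|_{L^\infty}\|\nabla_x\phi\|_{L^\infty}$.

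It remains to localize the mass of $\mu$ and to integrate in $s$. The derivatives of $\Phi_s$ vanish unless $(x+f'(v)s,v)\in\supp\phi$; since $\pi_x(\supp\phi)\subset B_R(\bar x)$ and $|f'(v)s|\le\|f'\|_\infty\bar s$ for $s\in[0,\bar s]$, this forces $x\in B_{R+\|f'\|_\infty\bar s}(\bar x)$. Integrating the displayed bound for $|\partial_v\Phi_s|$ against $|\mu|$ restricted to this set, and recalling that the $x$-marginal of $|\mu|$ is $\nu$, I obtain $|I'(s)|\le(\|\partial_v\phi\|_{L^\infty}+s\|f''\|_{L^\infty}\|\nabla_x\phi\|_{L^\infty})\,\nu(B_{R+\|f'\|_\infty\bar s}(\bar x))$. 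Finally, $\int\phi(\chi_{E_u}-\chi_{\FT(E_u,\bar s)})\,dx\,dv=I(0)-I(\bar s)=-\int_0^{\bar s}I'(s)\,ds\le\int_0^{\bar s}|I'(s)|\,ds$, and carrying out the elementary $s$-integral $\int_0^{\bar s}(\|\partial_v\phi\|_{L^\infty}+s\|f''\|_{L^\infty}\|\nabla_x\phi\|_{L^\infty})\,ds=\bar s\|\partial_v\phi\|_{L^\infty}+\tfrac{\bar s^2}{2}\|f''\|_{L^\infty}\|\nabla_x\phi\|_{L^\infty}$ produces exactly \eqref{E_main}.

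The only genuine technical point I anticipate is the justification of the two analytic operations used above: that $\Phi_s$ is an admissible test function for the distributional kinetic identity, and that one may differentiate $I$ under the integral sign. Both reduce to checking that $(x,v)\mapsto\Phi_s(x,v)=\phi(x+f'(v)s,v)$ belongs to $C^1_c(\R^d\times(0,+\infty))$ with supports bounded uniformly in $s\in[0,\bar s]$: compactness in $v$ is inherited from $\supp\phi$, the $x$-support lies in the enlarged ball found above, and $C^1$ regularity follows from $f\in C^2$ and $\phi\in C^1$, which also supplies the domination needed to differentiate under the integral. Once these regularity checks are in place the remaining steps are routine.
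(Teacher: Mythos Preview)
Your proof is correct and follows essentially the same approach as the paper: both introduce an $s$-dependent functional interpolating between $\chi_{E_u}$ and $\chi_{\FT(E_u,\bar s)}$, differentiate in $s$, invoke the kinetic identity to convert the transport term into $\int\partial_v(\cdot)\,d\mu$, bound $\partial_v$ of the transported test function pointwise, and integrate in $s$. The only cosmetic difference is that the paper packages this via an auxiliary transport equation in the extended variables $(s,x,v)$ with test function $\psi(s,x,v)=\phi(x+f'(v)(\bar s-s),v)$, whereas you perform the change of variables $x\mapsto x+f'(v)s$ directly and work with $\Phi_s(x,v)=\phi(x+f'(v)s,v)$; the two computations are related by a time reversal in $s$ and yield the same bound.
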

\begin{proof}
For every $s\in [0,\bar s]$ let 
\begin{equation*}
\chi^1(s,\cdot,\cdot):=\chi_{E_u} \qquad \mbox{and} \qquad \chi^2(s,\cdot,\cdot):=\chi_{\FT(E_u,s)}.
\end{equation*}
By Proposition \ref{P_kin} and the definition of the free transport operator we have
\begin{equation*}
\begin{split}
\partial_s \chi^1 + f'(v)\cdot \nabla_x\chi^1& = \partial_v \tilde \mu \qquad \mbox{in }\mathcal D', \\
\partial_s \chi^2 + f'(v)\cdot \nabla_x \chi^2 & = 0 \qquad \mbox{in }\mathcal D',
\end{split}
\end{equation*}
where $\tilde \mu= \mathscr L^1\times \mu$.
Let $\tilde \chi := \chi^1-\chi^2$ and $\psi(s,x,v):=\phi(x+f'(v)(\bar s - s),v)$. Then by a straightforward computation it follows that
\begin{equation}\label{E_kin_test}
\partial_s(\tilde\chi\psi)+f'(v)\cdot\nabla_x(\tilde \chi \psi)=\psi \partial_v\tilde\mu
\end{equation}
holds in the sense of distributions. Let $g:[0,\bar s]\to \R$ be defined by
\begin{equation*}
g(s) = \int_{\R^d\times (0,+\infty)}\tilde \chi(s) \psi(s) dxdv.
\end{equation*}
It follows from \eqref{E_kin_test} and the definition of $\tilde \mu$ that
\begin{equation*}
g'(s)=-\int_{\R^d\times [0,+\infty)}\partial_v\psi(s)d\mu
\end{equation*}
holds in the sense of distributions.
Therefore $g\in C^1([0,\bar s])$ and we have
\begin{equation*}
\begin{split}
\int_{\R^d\times [0,+\infty)} \phi (\chi_{E_u}-\chi_{\FT(E_u,\bar s)})dxdv = &~ g(\bar s)-g(0) \\
= &~ \int_0^{\bar s}g'(s)ds \\
=&~ -\int_0^{\bar s}\int_{\R^d\times [0,+\infty)} \partial_v\psi(s)d\mu ds \\
=&~ - \int_0^{\bar s}\int_{\R^d\times [0,+\infty)} \left(\partial_v\phi + f''(v)\cdot \nabla_x \phi \cdot (\bar s -s)\right)d\mu ds \\
\le &~ \left(\bar s \|\partial_v\phi\|_{L^\infty} + \frac{\bar s^2}{2}\|f''\|_{L^\infty}\|\nabla_x \phi\|_{L^\infty}\right)\nu(B_{R+ \|f'\|_\infty\bar s}(\bar x)),
\end{split}
\end{equation*}
which is our goal.
\end{proof}

\section{Structure of quasi-solutions}\label{S_structure}
In this section we assume the following quantitative nonlinearity estimate on the flux function $f$.
\begin{assum}\label{A_f}
There exists $\alpha\in (0,1]$ and $C>0$ so that for every $\xi\in \R^d$ with $|\xi|=1$, and any $\delta>0$, we have
\begin{equation*}
\mathscr L^1\big(\{v\in [0,1]:|f'(v)\cdot \xi| < \delta\}\big)\le C\delta^\alpha.
\end{equation*}
\end{assum}
We recall that this is the assumption that provides a fractional Sobolev regularity of the entropy solutions in  \cite{LPT_kinetic}  and
it is used in \cite{Silvestre_CL} to prove Property (1') of the introduction in the entropic setting.

As a corollary we get the following lemma.
\begin{lemma}\label{L_nonlinear}
Let $f$ be such that Assumption \ref{A_f} holds. Then there exists $\tilde c>0$ depending on $d,\|f'\|_{L^\infty}$ and $C$ from Assumption \ref{A_f} such that for every $\bar v, \bar h>0$ for which $\bar v+\bar h\le 1$ there exist
$v_1<\ldots< v_d \in [\bar v, \bar v + \bar h]$ enjoying the following property: for every $a \in \R^d$ there exists $a_1,\ldots, a_d \in \R$ so that 
\begin{equation}\label{E_claim_lemma}
a= \sum_{i=1}^da_if'(v_i), \qquad |a_i|\le \tilde c \frac{|a|}{\bar h^{d/\alpha}}, \qquad \mbox{and} \qquad |v_{i+1}-v_i|\ge \frac{\bar h}{2d}.
\end{equation}
\end{lemma}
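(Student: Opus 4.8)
The plan is to select $v_1,\dots,v_d$ so that the $d\times d$ matrix $M$ whose $i$-th column is $f'(v_i)$ is invertible and well conditioned. Once such points are found, for any $a\in\R^d$ the coefficients solving $a=\sum_i a_if'(v_i)$ are given by the vector $\mathbf a=(a_1,\dots,a_d)^T=M^{-1}a$, so that $|a_i|\le\|M^{-1}\|_{\mathrm{op}}|a|$ and the whole statement reduces to the bound $\|M^{-1}\|_{\mathrm{op}}\le\tilde c\,\bar h^{-d/\alpha}$. Crucially the same points $v_i$ will serve for every $a$, since they are chosen before $a$. The separation condition I would build in directly: split $[\bar v,\bar v+\bar h]$ into the $d$ consecutive subintervals $I_i:=[\bar v+(i-1)\bar h/d,\ \bar v+i\bar h/d]$ and require $v_i$ to lie in the left half $J_i\subset I_i$, of length $\bar h/(2d)$. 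Then $v_{i+1}-v_i\ge \bar h/d-\bar h/(2d)=\bar h/(2d)$ automatically.

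The heart of the matter is to pick $v_i\in J_i$ making $M$ well conditioned, and this is where Assumption \ref{A_f} enters. I would choose the points inductively. Having fixed $v_1,\dots,v_{k-1}$, set $V_{k-1}:=\Span\{f'(v_1),\dots,f'(v_{k-1})\}$ and pick a unit vector $\xi_k$ orthogonal to $V_{k-1}$, which exists since $\dim V_{k-1}\le k-1<d$ (for $k=1$ take any unit vector). Put $\delta:=\tfrac12(\bar h/(2dC))^{1/\alpha}$. Since $J_k\subset[0,1]$, Assumption \ref{A_f} gives $\mathscr L^1(\{v\in J_k:|f'(v)\cdot\xi_k|<\delta\})\le C\delta^\alpha<|J_k|=\bar h/(2d)$, so the complementary set is nonempty and there is $v_k\in J_k$ with $|f'(v_k)\cdot\xi_k|\ge\delta$. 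Because $\xi_k\perp V_{k-1}$ this yields $\mathrm{dist}(f'(v_k),V_{k-1})\ge|f'(v_k)\cdot\xi_k|\ge\delta$, and note that $\delta=c_0\,\bar h^{1/\alpha}$ with $c_0$ depending only on $d,C,\alpha$.

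It remains to convert these $d$ lower bounds on the successive heights into a bound on $\|M^{-1}\|_{\mathrm{op}}$. By the base-times-height formula for the volume of a parallelepiped,
\[
|\det M|=\prod_{k=1}^d \mathrm{dist}\big(f'(v_k),\Span\{f'(v_1),\dots,f'(v_{k-1})\}\big)\ge \delta^d=c_0^d\,\bar h^{d/\alpha},
\]
so $M$ is invertible. By Cramer's rule each entry of $M^{-1}$ equals $\pm$ a $(d-1)\times(d-1)$ cofactor of $M$ divided by $\det M$; since every entry of $M$ is bounded by $\|f'\|_{L^\infty}$, each cofactor is bounded by a constant $C_1=C_1(d,\|f'\|_{L^\infty})$ (by Hadamard's inequality, say). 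Hence $\|M^{-1}\|_{\mathrm{op}}\le d\,C_1/|\det M|\le \tilde c\,\bar h^{-d/\alpha}$ for a suitable $\tilde c=\tilde c(d,\|f'\|_{L^\infty},C,\alpha)$, and combined with $|a_i|\le\|M^{-1}\|_{\mathrm{op}}|a|$ this gives \eqref{E_claim_lemma}.

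The only genuinely delicate point, I expect, is the inductive application of Assumption \ref{A_f}: one must guarantee that at each step a nondegenerate direction survives inside the short interval $J_k$ of length $\bar h/(2d)$, which forces the scale $\delta\sim\bar h^{1/\alpha}$ and is exactly what produces the exponent $d/\alpha$ once the heights are multiplied in the determinant estimate. By contrast, the passage from the determinant lower bound to the operator-norm bound through the cofactor expansion is entirely routine, as is the book-keeping for the separation of the $v_i$.
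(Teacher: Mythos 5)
Your proof is correct and follows essentially the same route as the paper: both arguments pick the $v_i$ inductively in separated subintervals of $[\bar v,\bar v+\bar h]$, applying Assumption \ref{A_f} at scale $\delta\sim\bar h^{1/\alpha}$ to a unit vector $\xi_k$ orthogonal to $\Span\{f'(v_1),\dots,f'(v_{k-1})\}$, which is exactly where the exponent $d/\alpha$ is generated. The only divergence is the final routine step---you convert the lower bounds $|f'(v_k)\cdot\xi_k|\ge\delta$ into $|\det M|\ge\delta^d$ via the Gram--Schmidt volume formula and then use Cramer's rule with Hadamard's inequality, whereas the paper solves the triangular system $a\cdot\xi_{d+1-i}=\sum_{j\le i}a_{d+1-j}\,f'(v_{d+1-j})\cdot\xi_{d+1-i}$ by back-substitution---and both finishes give the same bound $|a_i|\le\tilde c\,|a|/\bar h^{d/\alpha}$.
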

\begin{proof}
For $j=1,\ldots,2d$ let
\begin{equation*}
I_j:=\left[\bar v +(j-1) \frac{\bar h}{2d}, \bar v + j \frac{\bar h}{2d}\right].
\end{equation*}
Let $\delta>0$ be such that $(1\vee C)\delta^\alpha=\frac{\bar h}{4d}<\frac{\bar h}{2d}$, where $C$ is the constant in Assumption \ref{A_f}. 
In particular $\delta \in (0,1)$. Then we have that there exists $v_1\in I_1$ such that
$|f'(v_1)|\ge \delta$. We denote by $\xi_1=\frac{f'(v_1)}{|f'(v_1)|}$.
By Assumption \ref{A_f} we can define inductively for $i=2,\ldots, d$ a vector $\xi_i\in S^{d-1}$ such that $\xi_i \perp \Span_{j\le i-1} \{f'(v_j)\}$ and a value $v_i\in I_{2i-1}$ such that $|f'(v_i)\cdot \xi_i|\ge \delta$. 
With this choice the requirement $|v_{i+1}-v_i|\ge \frac{\bar h}{2d}$ is satisfied and
clearly $\Span_{i\le d}\{f'(v_i)\}=\R^d$. In paticular for every $a\in \R^d$ there exists $a_1,\ldots,a_d$ such that 
\begin{equation*}
a = \sum_{i=1}^d a_if'(v_i).
\end{equation*}
Now we estimate the size of the coefficients $a_i$: inductively we prove that there exists a constant $c_i$ depending on $i$ and $\|f'\|_\infty$ such that for every $i=1,\ldots,d$ it holds $|a_{d+1-i}|\le c_i \frac{|a|}{\delta^i}$.
By the choice of $\xi_i$ we have that 
\begin{equation*}
a\cdot \xi_{d+1-i}= \sum_{j=1}^i a_{d+1-j}f'(v_{d+1-j})\cdot \xi_{d+1-i} 
\end{equation*}
therefore
\begin{equation}\label{E_est_coeff}
|a_{d+1-i}|\le \frac{|a\cdot \xi_{d+1-i}|}{|f'(v_{d+1-i})\cdot \xi_{d+1-i}|}+ \sum_{j=1}^{i-1}|a_{d+1-j}|\frac{|f'(v_{d+1-j})\cdot \xi_{d+1-i}|}{|f'(v_{d+1-i})\cdot \xi_{d+1-i}|}.
\end{equation}
For $i=1$ the estimate \eqref{E_est_coeff} says
\begin{equation*}
|a_d|\le \frac{|a\cdot \xi_{d}|}{|f'(v_{d})\cdot \xi_{d}|}\le \frac{|a|}{\delta},
\end{equation*}
so that the claim is satisfied with $c_1=1$. For $i=2,\ldots,d$ we get
\begin{equation*}
|a_{d+1-i}|\le \frac{|a|}{\delta}+ \sum_{j=1}^{i-1}|a_{d+1-j}|\frac{\|f'\|_{L^\infty}}{\delta}\le \frac{|a|}{\delta}+ \sum_{j=1}^{i-1}\frac{c_j|a|}{\delta^j}\cdot\frac{\|f'\|_{L^\infty}}{\delta},
\end{equation*}
therefore the claim is satisfied with $c_i=1+\|f'\|_{L^\infty}\sum_{j=1}^{i-1}c_j$. Letting $\bar c= \max_{i}c_i$ and exploiting the choice of $\delta$
we get that there exists $\tilde c>0$ as in the statement such that \eqref{E_claim_lemma} holds and this concludes the proof.
\end{proof}

Let us now fix some notation: for every $x\in \R^d$ and any $r>0$, denote by
\begin{equation}\label{E_def_h}
(u)_{B_r(x)}:=\frac{1}{|B_r(x)|}\int_{B_r(x)}u \qquad \mbox{and} \qquad 2\bar h_r(x)=\max_{y_1,y_2\in \bar B_{2r}(x)}\left((u)_{B_r(y_1)}-(u)_{B_r(y_2)}\right).
\end{equation}
In the following lemma we prove that $\bar h_r(x)$ can be estimated in terms of the difference between $E_u$ and an appropriate free transport of itself.

\begin{lemma}\label{L_overlapping_cyl}
Let $\tilde c>0$ be as in Lemma \ref{L_nonlinear}. Then there exists $c_1=c_1(d,f,\tilde c)>0$ such that for every $\bar x\in \R^d, r>0$ and every 
$\Delta v \in [0,c_1\bar h_{r}(\bar x)^{\frac{d}{\alpha}+1}]$ there exists $\tilde x\in \R^d$, $\tilde v \in (0,1)$ and $\tilde a\in \R$ such that
\begin{equation}\label{E_cond_1}
|\tilde a|\le \tilde c\frac{4r}{\bar h_r(\bar x)^{d/\alpha}}, \qquad |\tilde x - \bar x|\le 2r+d\|f'\|_\infty\tilde c \frac{4r}{\bar h_r(\bar x)^{d/\alpha}}
\end{equation}
and
\begin{equation*}
\mathcal L^{d+1}(\mathcal C \cap E_u) -  \mathcal L^{d+1}(\FT(E_u,\tilde a)\cap \mathcal C) \ge \bar h_r(\bar x)|B_r|\frac{\Delta v}{2d},
\end{equation*}
where
\begin{equation}\label{E_def_C}
\mathcal C:=B_r(\tilde x) \times [\tilde v, \tilde v + \Delta v].
\end{equation}
\end{lemma}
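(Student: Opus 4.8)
The plan is to convert the oscillation $2h:=2\bar h_r(\bar x)$ of the averages of $u$ into a free–transport defect, by realizing the passage from a ``low'' ball to a ``high'' ball as a chain of single–velocity displacements, one per coordinate direction, and then reading off the defect on a thin velocity slab. Write $h:=\bar h_r(\bar x)$ and $A_v:=\{u>v\}$, so that $\int_{B_r(y)}u=|B_r|(u)_{B_r(y)}=\mathcal L^{d+1}(E_u\cap(B_r(y)\times[0,\infty)))$ for every center $y$. First I would pick $y_1,y_2\in\bar B_{2r}(\bar x)$ attaining the maximum in \eqref{E_def_h}, so that $(u)_{B_r(y_1)}-(u)_{B_r(y_2)}=2h$, and set $a:=y_1-y_2$, $|a|\le 4r$. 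Since $u$ takes values in $(0,1)$ we have $2h\le 1$, so a window $[\bar v,\bar v+h]\subseteq[0,1]$ is available for Lemma \ref{L_nonlinear}.

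Next I would apply Lemma \ref{L_nonlinear} with $\bar h=h$ on that window to decompose $a=\sum_{k=1}^d a_k f'(v_k)$ with $v_1<\cdots<v_d\in[\bar v,\bar v+h]$, $|a_k|\le \tilde c\,|a|/h^{d/\alpha}\le \tilde c\,4r/h^{d/\alpha}$ and $|v_{k+1}-v_k|\ge h/(2d)$, and then build the spatial chain $z_0:=y_2$, $z_k:=z_{k-1}+a_k f'(v_k)$, so that $z_d=y_1$ and each step $z_k-z_{k-1}=a_k f'(v_k)$ is a pure free–transport displacement along the single direction $f'(v_k)$. Telescoping the averages gives $\sum_{k=1}^d |B_r|\big[(u)_{B_r(z_k)}-(u)_{B_r(z_{k-1})}\big]=2h|B_r|$, hence there is an index $k$ with $(u)_{B_r(z_k)}-(u)_{B_r(z_{k-1})}\ge 2h/d$. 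I would then set $\tilde x:=z_k$, $\tilde a:=a_k$, $\tilde v:=v_k$, and take the cylinder $\mathcal C$ as in \eqref{E_def_C}. The two bounds in \eqref{E_cond_1} are then immediate: $|\tilde a|=|a_k|\le \tilde c\,4r/h^{d/\alpha}$, and $|\tilde x-\bar x|\le |y_2-\bar x|+\sum_{j=1}^d|a_j|\,|f'(v_j)|\le 2r+d\,\|f'\|_\infty\,\tilde c\,4r/h^{d/\alpha}$. The geometric point is that on the slice $\{v=v_k\}$ the operator $\FT(\cdot,a_k)$ carries the fibre over $B_r(z_k)$ exactly onto the $E_u$–fibre over $B_r(z_k-a_k f'(v_k))=B_r(z_{k-1})$, so that the transport defect over $\mathcal C$ equals $\int_{v_k}^{v_k+\Delta v}\big[|A_v\cap B_r(z_k)|-|A_v\cap B_r(z_{k-1})|\big]\,dv$ up to an error coming from the $v$–dependence of $f'(v)$.

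Two points need care, the second being the real difficulty. The error term is harmless: on $[v_k,v_k+\Delta v]$ the source center $z_k-f'(v)a_k$ moves away from $z_{k-1}$ by at most $\|f''\|_{L^\infty}|a_k|\,\Delta v$, so each superlevel–set measure is perturbed by $\lesssim r^{d-1}\|f''\|_{L^\infty}|a_k|\,\Delta v$, and the integrated error is $\lesssim |B_r|\,\|f''\|_{L^\infty}\,\tilde c\,(\Delta v)^2 h^{-d/\alpha}$, which is at most half of the target $h|B_r|\Delta v/(2d)$ as soon as $\Delta v\le c_1 h^{d/\alpha+1}$ with $c_1=c_1(d,f,\tilde c)$ small. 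The genuine obstacle is that telescoping only controls the \emph{average over all} $v$ of the per–slice defect, namely $\int_0^1[|A_v\cap B_r(z_k)|-|A_v\cap B_r(z_{k-1})|]\,dv\ge 2h|B_r|/d$, whereas the thin cylinder sees only velocities near $v_k$, and the displacement direction $f'(v_k)$ cannot be reproduced at any other velocity without an uncontrolled error (this is exactly why all $v_j$ must be forced into the short window, with spacing $\ge h/(2d)$). I expect to overcome this by choosing the velocity window entering Lemma \ref{L_nonlinear} to be one on which the oscillation of $u$ is already concentrated—so that $v\mapsto |A_v\cap B_r(y_1)|-|A_v\cap B_r(y_2)|$ is of size $\gtrsim h|B_r|$ throughout the window (such a window exists since this integrand integrates to $2h|B_r|$ and is bounded by $|B_r|$)—and then distributing this defect across the $d$ chain steps and matching each step index to a velocity slice on which its own defect is large. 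This velocity–direction matching is the delicate combinatorial core of the estimate; the factor $1/(2d)$ in the conclusion is precisely the price of selecting the single best of the $d$ steps.
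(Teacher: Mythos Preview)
Your overall architecture is right and matches the paper: pick $y_1,y_2$ realizing the oscillation, decompose $a=y_1-y_2=\sum a_kf'(v_k)$ via Lemma~\ref{L_nonlinear} on a window of length $\bar h$, build the spatial chain $z_k$, pick one step, and read off the free--transport defect on a thin $v$--slab, absorbing the $f''$--error exactly as you describe. The bounds in \eqref{E_cond_1} and the error term are fine.

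The gap is precisely the one you flag, and your proposed fix does not close it. Telescoping only in position gives you an index $k$ with $\int_0^1\big[m(z_k,v)-m(z_{k-1},v)\big]dv\ge 2\bar h|B_r|/d$, but the cylinder sees only $v\in[v_k,v_k+\Delta v]$. Your remedy --- choose the window so that $m(y_1,v)-m(y_2,v)\gtrsim \bar h|B_r|$ throughout --- is problematic on two counts. First, that integrand need not be nonnegative, so there is no reason a window of length $\bar h$ with this property exists. Second, and more seriously, even if it did, it would say nothing about the \emph{intermediate} differences $m(z_k,v)-m(z_{k-1},v)$: the chain step $k$ may carry most of its defect at velocities far from $v_k$, while another step $j$ carries its defect near $v_k$. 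Your ``matching each step to a slice on which its own defect is large'' is exactly the right wish, but nothing in the construction forces step $k$ to be matched with $v_k$.

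The missing idea is to exploit the monotonicity of $v\mapsto m(x,v):=\mathscr L^d(\{u>v\}\cap B_r(x))$. The paper first performs a shift: from $\int_0^1[m(y_1,v)-m(y_2,v)]dv=2\bar h|B_r|$ one gets $\int_0^{1-\bar h}[m(y_1,v+\bar h)-m(y_2,v)]dv\ge \bar h|B_r|$, hence there is a single level $\bar v$ with $m(y_1,\bar v+\bar h)-m(y_2,\bar v)\ge \bar h|B_r|$. Now apply Lemma~\ref{L_nonlinear} on $[\bar v,\bar v+\bar h]$ and telescope \emph{diagonally}, advancing in position and in velocity simultaneously:
\[
m(x_d,v_{d+1})-m(x_0,v_1)=\sum_{i=1}^d\big[m(x_i,v_{i+1})-m(x_{i-1},v_i)\big],
\]
with $v_0=\bar v$, $v_{d+1}=\bar v+\bar h$. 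This yields an $l$ with $m(x_l,v_{l+1})-m(x_{l-1},v_l)\ge \bar h|B_r|/d$ --- a pointwise statement at the correct pair of velocities. Monotonicity in $v$ then propagates this to all of $[v_l,v_l+\Delta v]$ (for the transported part) and $[v_{l+1}-\Delta v,v_{l+1}]$ (for the $E_u$ part, comparing $\mathcal C$ with the shifted cylinder $\mathcal C'$), after which your $f''$--error absorbs the remaining discrepancy. The diagonal telescoping is what replaces the combinatorial matching you were looking for.
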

\begin{proof}
For any $x\in \R^d$, $v\in [0,1]$ and $r>0$ let us set 
\begin{equation*}
m_r(x,v):= \mathscr L^d \big(\{y\in B_r(x):u(y)>v\}\big)
\end{equation*}
and notice that by a simple application of Fubini theorem we have
\begin{equation*}
(u)_{B_r(x)}= \frac{1}{|B_r(x)|}\int_0^1 m_r(x,v)dv.
\end{equation*}
We fix $\bar x \in \R^d$ and $r>0$ and in the remaining part of the proof we simply denote $\bar h_r(\bar x)$ by $\bar h$ and $m_r$ by $m$.
Let $y_1,y_2\in B_{2r}(\bar x)$ be realizing the maximum in \eqref{E_def_h}. 
Therefore
\begin{equation*}
\begin{split}
2\bar h |B_r| &=~ \int_0^1 \left(m(y_1,v)-m(y_2,v)\right)dv \\
=&~ \int_0^{1-\bar h} \left(m(y_1,v+\bar h)-m(y_2,v)\right)dv + \int_0^{\bar h}m(y_1,v)dv - \int_{1-\bar h}^1m(y_2,v)dv \\
\le &~ \int_1^{1-\bar h}\left(m(y_1,v+\bar h)-m(y_2,v)\right)dv  + \bar h |B_r|.
\end{split}
\end{equation*}
In particular there exists $\bar v\in [0,1-\bar h]$ such that 
\begin{equation}\label{E_diff}
m(y_1,\bar v+ \bar h)-m(y_2,\bar v) \ge \bar h |B_r|.
\end{equation}
Now we are in position to apply Lemma \ref{L_nonlinear} with the choice of $\bar v$ and $\bar h$ as above and with $a=y_1-y_2$.
Let $v_1,\ldots,v_d$ be from Lemma \ref{L_nonlinear} and set $x_0:=y_2$ and inductively $x_i:=x_{i-1}+a_if'(v_i)$ for $i=1,\ldots,d$.
We moreover set $v_0=\bar v$ and $v_{d+1}=\bar v + \bar h$. By construction $y_1=x_d$ therefore by \eqref{E_diff}
we have that $m(x_{d}, v_{d+1})-m(x_0,v_0)\ge \bar h |B_r|$. Since $v_0\le v_1$ it holds $m(x_0,v_1)\le m(x_0,v_0)$ therefore 
\begin{equation*}
\bar h |B_r|\le  m(x_{d}, v_{d+1})-m(x_0,v_1) = \sum_{i=1}^d m(x_i,v_{i+1})-m(x_{i-1},v_i).
\end{equation*}
In particular there exists $l\in \{1,\ldots, d\}$ such that 
\begin{equation}\label{E_bad_l}
m(x_l,v_{l+1})-m(x_{l-1},v_l)\ge \frac{\bar h |B_r|}{d}.
\end{equation}
We set
\begin{equation*}
\tilde a := a_l, \qquad \tilde v:=v_l \qquad \mbox{and} \qquad \tilde x:=x_l.
\end{equation*}
Notice that the two conditions in \eqref{E_cond_1} are satisfied by Lemma \ref{L_nonlinear}.

Given $x\in \R^d$, $v\in [0,1]$ and $s\in \R$ set
\begin{equation*}
m'(x,v,s):= m(x-f'(v)s,v) = \mathscr L^d\big(\{y\in B_r(x):(y,v)\in \FT(E_u,s)\}\big).
\end{equation*}
We now prove the following claim.

\textbf{Claim.}
There exists a constant $c_1=c_1(d,f,\tilde c)>0$ such that if $0\le \Delta v \le c_1 \bar h^{\frac{d}{\alpha}+1}$, then
for every $v_1\in [v_l,v_l+\Delta v]$ and any $v_2\in [v_{l+1}-\Delta v,v_{l+1}]$ it holds
\begin{equation*}
m(x_l,v_2)-m'(x_l,v_1,a_l) \ge \frac{\bar h|B_r|}{2d}.
\end{equation*}
\noindent
\emph{Proof of the claim.} By the monotonicity of $m$ with respect to $v$ and \eqref{E_bad_l} it holds 
\begin{equation*}
\begin{split}
m(x_l,v_2)-m'(x_l,v_1,a_l)&= m(x_l,v_2) - m(x_l,v_{l+1}) + m(x_l,v_{l+1}) - m(x_{l-1},v_l)+ m(x_{l-1},v_l)- m'(x_l,v_1,a_l) \\
\ge &~ \frac{\bar h|B_r|}{d}+ m(x_l-a_lf'(v_l),v_l)- m (x_l-a_lf'(v_1),v_1) \\
\ge &~ \frac{\bar h|B_r|}{d}+ m(x_l-a_lf'(v_l),v_l)- m (x_l-a_lf'(v_1),v_l).
\end{split}
\end{equation*}
Therefore it is sufficient to prove that there exists $c_1$ as in the statement such that if $0\le \Delta v\le c_1 \bar h^{\frac{d}{\alpha}+1}$, then
\begin{equation}\label{E_est_deltav}
m (x_l-a_lf'(v_1),v_l) - m(x_l-a_lf'(v_l),v_l) \le  \frac{\bar h|B_r|}{2d}.
\end{equation}
Clearly it holds
\begin{equation*}
\begin{split}
m (x_l-a_lf'(v_1),v_l) - m(x_l-a_lf'(v_l),v_l) &\le~ |B_r(x_l-a_lf'(v_1)) \Delta B_r(x_l-a_lf'(v_l))| \\
&\le~ C_d r^{d-1}|a_l|\|f''\|_{L^\infty}\Delta v.
\end{split}
\end{equation*}
Since by Lemma \ref{L_nonlinear} it holds $|a_l|\le \tilde c \frac{|a|}{\bar h^{\frac{d}{\alpha}}}$ and $|a|=|y_2-y_1|\le 4r$, we get
\begin{equation*}
m (x_l-a_lf'(v_1),v_l) - m(x_l-a_lf'(v_l),v_l) \le \frac{4C_dr^d\tilde c \|f''\|_{L^\infty}}{\bar h^{\frac{d}{\alpha}}}\Delta v,
\end{equation*}
which implies \eqref{E_est_deltav} if $0\le \Delta v\le c_1 \bar h^{\frac{d}{\alpha}+1}$ with 
$c_1=\omega_d\left(8dC_d\tilde c \|f''\|_{L^\infty}\right)^{-1}$. This concludes the proof of the claim.

Let us now consider the cylinders $\mathcal C$ defined in \eqref{E_def_C} and
\begin{equation*}
\mathcal C' := B_r(x_l)\times [v_{l+1}-\Delta v, v_{l+1}].
\end{equation*}
By the previous claim and the monotonicity of $m$ with respect to $v$ we have
\begin{equation*} 
\begin{split}
 \mathcal L^{d+1}(\mathcal C \cap E_u) -  \mathcal L^{d+1}(\FT(E_u,a_l)\cap \mathcal C)  \ge& ~ 
 \mathcal L^{d+1}(\mathcal C' \cap E_u) -  \mathcal L^{d+1}(\FT(E_u,a_l)\cap \mathcal C) \\
  = &~ \int_0^{\Delta v} m(x_l,v_{l+1}-\Delta v + v) - m'(x_l,v_l+v,a_l) dv\\
\ge &~ \frac{\bar h |B_r|}{2d}\Delta v.
\end{split}
\end{equation*}
and this concludes the proof of the lemma.
\end{proof}

In the next proposition we take advantage of the last lemma to build an appropriate test function $\phi$ in Theorem \ref{T_Wass1} and estimate
$\bar h_r(\bar x)$ in terms of the dissipation measure $\nu$.

\begin{proposition}\label{P_Kant}
There exist $C_1=C_1(d,f)>0$, $C_2=C_2(d,f,c_1,\tilde c)>0$ and $\gamma=\gamma(d,\alpha)>0$ such that for every $r>0$ there exists $r_2\in [r,C_1 r/\bar h^{d/\alpha}]$ for which
\begin{equation*}
\nu(B_{r_2})\ge C_2 \bar h^\gamma r_2^{d-1}.
\end{equation*}
\end{proposition}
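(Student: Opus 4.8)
The plan is to combine Lemma~\ref{L_overlapping_cyl} with the weak estimate of Theorem~\ref{T_Wass1}, applied to a carefully chosen test function supported on the cylinder produced by the lemma. The starting point is Lemma~\ref{L_overlapping_cyl}: fixing $\bar x$ and writing $\bar h=\bar h_r(\bar x)$, I obtain a cylinder $\mathcal C=B_r(\tilde x)\times[\tilde v,\tilde v+\Delta v]$, a shift $\tilde a$ and a lower bound
\begin{equation*}
\mathcal L^{d+1}(\mathcal C\cap E_u)-\mathcal L^{d+1}(\FT(E_u,\tilde a)\cap\mathcal C)\ge \bar h|B_r|\frac{\Delta v}{2d},
\end{equation*}
valid for every admissible $\Delta v\le c_1\bar h^{\frac d\alpha+1}$. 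I would fix $\Delta v$ at (a fixed fraction of) this maximal value, so that the right-hand side scales like $\bar h|B_r|\,\bar h^{\frac d\alpha+1}\sim \bar h^{\frac d\alpha+2}r^d$ up to dimensional constants. The left-hand side is exactly the integral appearing in Theorem~\ref{T_Wass1} (with $\bar s=\tilde a$) when $\phi$ is replaced by the indicator $\chi_{\mathcal C}$, so the strategy is to bound it from above by $\nu$ of a suitable ball.

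Next I would choose the test function $\phi$. Since $\chi_{\mathcal C}$ is not $C^1$, I take a smooth approximation of $\chi_{\mathcal C}$ adapted to the cylinder: $\phi$ equal to $1$ on a slightly shrunken cylinder and supported within $\mathcal C$, with $\|\partial_v\phi\|_{L^\infty}\lesssim (\Delta v)^{-1}$ and $\|\nabla_x\phi\|_{L^\infty}\lesssim r^{-1}$. The left-hand side of \eqref{E_main} then reproduces the left-hand side of the lemma's inequality up to a controlled error from the smoothing collar, so the lower bound $\bar h^{\frac d\alpha+2}r^d$ (times a constant) persists. Applying Theorem~\ref{T_Wass1} with $\bar s=\tilde a$ gives
\begin{equation*}
\bar h^{\frac d\alpha+2}r^d \lesssim \Big(|\tilde a|\,(\Delta v)^{-1}+|\tilde a|^2\|f''\|_\infty r^{-1}\Big)\,\nu\big(B_{R+\|f'\|_\infty|\tilde a|}(\bar x)\big).
\end{equation*}
Here $R=2r+d\|f'\|_\infty\tilde c\,4r\bar h^{-d/\alpha}$ and $|\tilde a|\le 4\tilde c\,r\bar h^{-d/\alpha}$ from \eqref{E_cond_1}, so both the coefficient and the radius of the ball are powers of $r$ and $\bar h$. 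Setting $r_2:=R+\|f'\|_\infty|\tilde a|$, one checks $r_2\in[r,C_1r/\bar h^{d/\alpha}]$, giving the claimed range.

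The remaining work is pure bookkeeping of exponents. Inserting $|\tilde a|\lesssim r\bar h^{-d/\alpha}$ and $\Delta v\sim \bar h^{\frac d\alpha+1}$ into the coefficient, the first term is $|\tilde a|(\Delta v)^{-1}\lesssim r\bar h^{-d/\alpha}\bar h^{-(\frac d\alpha+1)}=r\bar h^{-2d/\alpha-1}$, and the second is $|\tilde a|^2 r^{-1}\lesssim r\bar h^{-2d/\alpha}$; since $\bar h\le1$ the first dominates, so the coefficient is $\lesssim r\bar h^{-2d/\alpha-1}$. Dividing through, I get $\nu(B_{r_2})\gtrsim \bar h^{\frac d\alpha+2}r^d\cdot (r\bar h^{-2d/\alpha-1})^{-1}=\bar h^{\frac{3d}\alpha+3}r^{d-1}$. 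Finally I re-express $r^{d-1}$ in terms of $r_2^{d-1}$ using $r_2\le C_1r/\bar h^{d/\alpha}$, i.e. $r^{d-1}\ge (\bar h^{d/\alpha}/C_1)^{d-1}r_2^{d-1}$, which only lowers the power of $\bar h$ by a fixed amount, yielding $\nu(B_{r_2})\ge C_2\bar h^\gamma r_2^{d-1}$ with $\gamma=\gamma(d,\alpha)$ and $C_2=C_2(d,f,c_1,\tilde c)$. The main obstacle I anticipate is not any single estimate but the careful smoothing of $\chi_{\mathcal C}$: I must ensure the collar where $\phi\ne\chi_{\mathcal C}$ contributes an error strictly smaller than the main lower bound, which forces the smoothing scale to be a fixed fraction of $r$ and $\Delta v$ and dictates the precise dependence of the constants; keeping track of which power of $\bar h$ (always $\le1$) wins in each comparison is where sign errors would most easily creep in.
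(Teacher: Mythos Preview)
Your approach is the paper's: invoke Lemma~\ref{L_overlapping_cyl}, test Theorem~\ref{T_Wass1} against a mollified indicator of the cylinder $\mathcal C$, then bookkeep exponents. There is, however, a real error in the smoothing step. You assert that the collar constraint ``forces the smoothing scale to be a fixed fraction of $r$ and $\Delta v$'', giving $\|\nabla_x\phi\|_\infty\lesssim r^{-1}$ and $\|\partial_v\phi\|_\infty\lesssim(\Delta v)^{-1}$. This is incompatible with the collar being small: if you shrink $\mathcal C$ by $\theta r$ in $x$ and $\theta'\Delta v$ in $v$, the collar has volume of order $(\theta+\theta')|B_r|\Delta v$, while the main lower bound from the lemma is $\tfrac{\bar h}{2d}|B_r|\Delta v$. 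Since $\bar h$ can be arbitrarily small, you are forced to take $\theta,\theta'\lesssim\bar h$, \emph{not} a fixed fraction. The correct derivative bounds are therefore $\|\nabla_x\phi\|_\infty\lesssim(\bar h\, r)^{-1}$ and $\|\partial_v\phi\|_\infty\lesssim(\bar h\,\Delta v)^{-1}$, each a factor $\bar h^{-1}$ larger than you used, so your exponent $3d/\alpha+3$ before the final conversion is too optimistic.

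With the corrected scales the rest of your argument goes through verbatim and produces a valid (slightly larger) $\gamma$. The paper does the same thing, smoothing \emph{outward} rather than inward and introducing explicit parameters $r',v'$ for the two smoothing widths; it then chooses $v'\sim\bar h\,\Delta v$ and $r'\sim\min(r,\bar h\,\Delta v\,r)$ precisely to saturate the collar constraint~\eqref{E_const_r'_v'}, arriving at $\gamma=d(d+3)/\alpha+4$. The two variants are equivalent; the only point that matters is that the smoothing scales must carry a factor of $\bar h$.
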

\begin{proof}
Let $a,b>0$ and $\psi_{a,b}:[0,+\infty)\to [0,1]$ be a smooth function such that 
\begin{enumerate}
\item $\psi(t)=1$ for every $t\in[0,a]$,
\item $\psi(t)=0$ for every $t\ge a+b$,
\item $|\psi'(t)|\le \frac{2}{b}$ for every $t > 0$.
\end{enumerate}
Let $r', v'>0$ be two parameters that will be fixed later and let $\tilde x,\tilde v,\tilde a, \Delta v$ be as in Lemma \ref{L_overlapping_cyl}.
Consider the function $\phi:\R^d_x\times [0,+\infty)_v\to \R$ defined by
\begin{equation*}
\phi(x,v):=\psi_{r,r'}(|x-\tilde x|)\psi_{\Delta v/2, v'}\left(\left|v-\left(\tilde v+\frac{\Delta v}{2}\right)\right|\right).
\end{equation*}
Then the following estimate holds:
\begin{equation}\label{E_est_cyl1}
\begin{split}
\int_{\R^d\times [0,+\infty)} \phi &(\chi_{E_u}-\chi_{\FT(E_u,\tilde a)})dxdv = ~ \int_{\mathcal C} \phi (\chi_{E_u}-\chi_{\FT(E_u,\tilde a)})dxdv +  \int_{(\supp \phi) \setminus \mathcal C} \phi (\chi_{E_u}-\chi_{\FT(E_u,\tilde a)})dxdv \\
= &~ \mathcal L^{d+1}(\mathcal C\cap E_u) -\mathcal L^{d+1}(\mathcal C \cap \FT(E_u,\tilde a)) +  \int_{(\supp \phi) \setminus \mathcal C} \phi (\chi_{E_u}-\chi_{\FT(E_u,\tilde a)})dxdv \\
\ge &~ \frac{\bar h |B_r|}{2d}\Delta v  -\mathcal L^{d+1}(((\supp \phi) \setminus \mathcal C) \cap (\R^d \times [0,1])).
\end{split}
\end{equation}
By an elementary geometric consideration and assuming $r'\le r$ we get
\begin{equation}\label{E_est_cyl2}
\begin{split}
\mathcal L^{d+1}(((\supp \phi) \setminus \mathcal C) \cap (\R^d \times [0,1])) \le &~ \omega_d (r+r')^d((\Delta v+2v')\wedge 1)-\omega_dr^d \Delta v \\
\le &~ c_d r^{d-1}r' + 2\omega_dr^d v',
\end{split}
\end{equation}
where $c_d$ is a geometric constant depending only on the dimension $d$.
It follows from \eqref{E_est_cyl1} and \eqref{E_est_cyl2} that under the constraints
\begin{equation}\label{E_const_r'_v'}
 c_d r^{d-1}r' \le \frac{\bar h |B_r|}{8d}\Delta v \qquad \mbox{and} \qquad 2\omega_dr^d v'\le \frac{\bar h |B_r|}{8d}\Delta v
\end{equation}
it holds
\begin{equation*}
\int_{\R^d\times [0,+\infty)} \phi (\chi_{E_u}-\chi_{\FT(E_u,a_l)})dxdv  \ge \frac{\bar h |B_r|}{4d}\Delta v.
\end{equation*}
By \eqref{E_main} we deduce
\begin{equation}\label{E_together}
\frac{\bar h |B_r|}{4d}\Delta v \le \left( \frac{2|\tilde a|}{v'} + \frac{\tilde a^2}{r'}\|f''\|_{L^\infty}\right)\nu(B_{r+r'+\|f'\|_{L^\infty}|a_l|}).
\end{equation}
By \eqref{E_cond_1} and assuming $r'\le r$ we have that
\begin{equation}\label{E_R_le_r}
R:=r+r'+ \|f'\|_{L^\infty}|a_l| \le 2r+ 4\tilde c\|f'\|_{L^\infty}\frac{r}{\bar h^{d/\alpha}}\le (2+4\tilde c \|f'\|_{L^\infty})\frac{r}{\bar h^{d/\alpha}}.
\end{equation}
By Lemma \ref{L_overlapping_cyl} we can choose $\Delta v= c_1\bar h^{\frac{d}{\alpha}+1}$ in  \eqref{E_together}, and again by
\eqref{E_cond_1} we have that for every $r'\le r$ and $v'$ satisfying \eqref{E_const_r'_v'} it holds
\begin{equation*}
\frac{\bar h |B_r|}{4d}  c_1\bar h^{\frac{d}{\alpha}+1}\le \left( \frac{8\tilde c r}{\bar h^{d/\alpha}v'} +
\frac{16\tilde c^2 \|f''\|_{L^\infty}r^2}{\bar h^{2d/\alpha}r'}\right) \nu(B_R).
\end{equation*}
In particular at least one of the two addends in the right hand side must be bigger than half the left hand side, i.e. at least one of the following inequalities holds:
\begin{equation}\label{E_alternative}
\frac{\omega_dr^{d-1}c_1v'\bar h^{\frac{2d}{\alpha}+2}}{32d\tilde c}\le \nu (B_R), \qquad 
\frac{\bar h^{\frac{3d}{\alpha}+2}\omega_dr^{d-2}c_1r'}{64d\tilde c^2\|f''\|_{L^\infty}}\le \nu (B_R).
\end{equation}
We choose now $v'$ and $r'$ as follows:
\begin{equation}\label{E_def_r'_v'}
 v':=  \, \frac{\bar h \Delta v}{16 d} = \frac{c_1\bar h^{\frac{d}{\alpha}+2}}{16d}
 \qquad \mbox{and} \qquad 
 r':= \, r \wedge \left( \frac{\bar h |B_r| \Delta v}{8dc_dr^{d-1}} \right) =
 r \wedge \left( \frac{c_1\bar h^{\frac{d}{\alpha}+2}\omega_dr}{8dc_d}\right).
\end{equation}
In particular the constraints $r'\le r$ and \eqref{E_const_r'_v'} are satisfied.
The first inequality in \eqref{E_alternative} reads
\begin{equation}\label{E_ineq_1}
\frac{\omega_dc_1^2}{256d^2\tilde c}\bar h^{\frac{3d}{\alpha}+4}r^{d-1}\le \nu(B_R).
\end{equation}
The second inequality in \eqref{E_alternative} implies
\begin{equation}\label{E_ineq_23}
\frac{\omega_dc_1}{64d\tilde c^2\|f''\|_{L^\infty}}\bar h^{\frac{3d}{\alpha}+2}r^{d-1}\le \nu (B_R) \qquad \mbox{or} \qquad 
\frac{\omega_d^2c_1^2}{512d^2c_d\tilde c^2\|f''\|_{L^\infty}}\bar h^{\frac{4d}{\alpha}+4}r^{d-1} \le \nu (B_R),
\end{equation}
depending on in which terms the minimum in \eqref{E_def_r'_v'} is attained. So we have that at least one of the three inequalities from 
\eqref{E_ineq_1} and \eqref{E_ineq_23} holds true. Therefore there exists $\tilde c_1>0$ depending on $d,c_1,\tilde c,f$ such that
\begin{equation}\label{E_r_R}
\tilde c_1 \bar h^{\frac{4d}{\alpha}+4}r^{d-1}\le \nu (B_R).
\end{equation}
The last step is to replace $r$ with $R$ in \eqref{E_r_R}. From \eqref{E_R_le_r} we have that
\begin{equation*}
r^{d-1}\ge R^{d-1} \bar h^{\frac{d(d-1)}{\alpha}}\frac{1}{(2+4\tilde c\|f'\|_{L^\infty})^{d-1}}.
\end{equation*}
Therefore we get from \eqref{E_r_R} that the statement holds true with
\begin{equation*}\label{E_def_gamma}
\gamma = \frac{d(d+3)}{\alpha}+4 \qquad \mbox{and} \qquad C_2= \frac{\tilde c_1}{(2+4\tilde c\|f'\|_{L^\infty})^{d-1}}
\end{equation*}
and this concludes the proof.
\end{proof}

In the following corollary we deduce a power decay of $\bar h_r(\bar x)$ from the power decay of $\nu(B_r(\bar x))r^{d-1}$.

\begin{corollary}\label{C_oscillation}
Let $\bar x\in \R^d$, $R,C>0$ and $\beta>0$ be such that $\nu(B_r(\bar x))\le C(r^{d-1+\beta})$ for $r\in (0,R)$.
Then there exists $\gamma'>0$ such that 
\begin{equation*}\label{E_diss_pol}
\bar h_r(\bar x)=O(r^{\gamma'}) \qquad \mbox{as }r\to 0.
\end{equation*}
\end{corollary}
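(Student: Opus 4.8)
The plan is to feed the hypothesised decay of $\nu$ into Proposition \ref{P_Kant} and close the estimate by a dichotomy on the size of $\bar h_r(\bar x)$. Write $h:=\bar h_r(\bar x)$; we may assume $h>0$, since otherwise the conclusion is trivial for that radius. Proposition \ref{P_Kant} provides, for each $r>0$, a radius $r_2\in[r,\,C_1 r/h^{d/\alpha}]$ with
$$C_2\,h^\gamma\, r_2^{d-1}\le \nu(B_{r_2}(\bar x)).$$
I would like to bound the right-hand side by the hypothesis $\nu(B_{r_2}(\bar x))\le C\,r_2^{d-1+\beta}$, but this is only licit when $r_2<R$, and the available upper bound $r_2\le C_1 r/h^{d/\alpha}$ degenerates as $h\to0$. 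Resolving this tension is the only real point of the argument.

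First I would split according to whether $h\le (C_1 r/R)^{\alpha/d}$ or $h> (C_1 r/R)^{\alpha/d}$. In the first case $h$ is already controlled, $h\le (C_1/R)^{\alpha/d} r^{\alpha/d}$, which is a power decay with exponent $\alpha/d$ and requires nothing further. In the second case $h^{d/\alpha}>C_1 r/R$, hence $r_2\le C_1 r/h^{d/\alpha}<R$ and the decay hypothesis applies. Combining it with Proposition \ref{P_Kant} and dividing by $r_2^{d-1}$ gives $C_2 h^\gamma\le C r_2^\beta$, and inserting $r_2\le C_1 r/h^{d/\alpha}$ once more yields
$$C_2\,h^{\gamma+\beta d/\alpha}\le C\,C_1^\beta\, r^\beta,$$
that is, a power bound with exponent $\beta/(\gamma+\beta d/\alpha)$.

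Setting $\gamma':=\min\{\alpha/d,\ \beta/(\gamma+\beta d/\alpha)\}>0$ and restricting to $r<1$ (so that $r^{\gamma'}\ge r^{\alpha/d}$ and $r^{\gamma'}\ge r^{\beta/(\gamma+\beta d/\alpha)}$), in both cases one obtains $\bar h_r(\bar x)\le M\,r^{\gamma'}$ for a constant $M$ independent of $r$, which is exactly $\bar h_r(\bar x)=O(r^{\gamma'})$ as $r\to0$. The main obstacle, as flagged above, is purely the admissibility constraint $r_2<R$; once it is isolated into the first alternative of the dichotomy, the rest is elementary manipulation of the explicit constants coming from Proposition \ref{P_Kant}.
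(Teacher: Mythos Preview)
Your argument is correct and follows essentially the same route as the paper: both feed the decay hypothesis into Proposition~\ref{P_Kant} and rearrange to get $h^{\gamma+\beta d/\alpha}\lesssim r^\beta$, yielding the exponent $\beta/(\gamma+\beta d/\alpha)$. The only cosmetic difference is the choice of dichotomy: the paper assumes $h\ge r^{\gamma'}$ (and tacitly uses that this forces $r_2<R$ for small $r$, since $\gamma' d/\alpha<1$), whereas you split directly on whether $r_2<R$ is guaranteed; your extra alternative $\gamma'=\alpha/d$ is in fact never the minimum, so the two exponents coincide.
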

\begin{proof}
Let
\begin{equation*}
\gamma'=\frac{\beta}{\gamma}\left( 1+\frac{d\beta}{\alpha\gamma}\right)^{-1}.
\end{equation*}
Assume that $\bar h_r (\bar x)\ge r^{\gamma'}$. Then by Proposition \ref{P_Kant} there exists $r_2\in [r,C_1\frac{r}{\bar h_r^{d/\alpha}(\bar x)}]$ such that 
\begin{equation*}
\bar h_r(\bar x)\le \left(\frac{\nu(B_{r_2}(\bar x))}{C_2r_2^{d-1}}\right)^{\frac{1}{\gamma}} \le \left(\frac{C}{C_2}\right)^{\frac{1}{\gamma}} r_2^{\beta/\gamma}\le \left(\frac{CC_1^\beta}{C_2}\right)^{\frac{1}{\gamma}}\frac{r^{\beta/\gamma}}{\bar h_r^{d\beta/\alpha\gamma}(\bar x)}
\le  \left(\frac{CC_1^\beta}{C_2}\right)^{\frac{1}{\gamma}}\frac{r^{\beta/\gamma}}{r^{\gamma'd\beta/\alpha\gamma}}= \left(\frac{CC_1^\beta}{C_2}\right)^{\frac{1}{\gamma}}r^{\gamma'}.
\end{equation*}
This proves that for sufficiently small $r$ it holds
\begin{equation*}
\bar h_r(\bar x)\le \left(1\vee \left(\frac{CC_1^\beta}{C_2}\right)^{\frac{1}{\gamma}}\right) r^{\gamma'}. \qedhere
\end{equation*}
\end{proof}

\begin{lemma}\label{L_polynomial}
Let $\bar x\in \R^d$ be a point of vanishing mean oscillation of $u$ such that $\exists \gamma'>0$ for which
\begin{equation*}
\bar h_r(\bar x)=O(r^{\gamma'}) \qquad \mbox{as }r\to 0.
\end{equation*}
Then $\bar x$ is a Lebesgue point of $u$.
\end{lemma}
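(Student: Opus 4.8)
The plan is to reduce the claim to the convergence of the averages $(u)_{B_r(\bar x)}$ as $r\to 0$. Writing $a(r):=(u)_{B_r(\bar x)}$, if I can produce $\ell\in\R$ with $a(r)\to\ell$, then the triangle inequality
\[
\frac{1}{|B_r(\bar x)|}\int_{B_r(\bar x)}|u-\ell| \le \frac{1}{|B_r(\bar x)|}\int_{B_r(\bar x)}|u-a(r)| + |a(r)-\ell|
\]
finishes the proof: the first term vanishes because $\bar x$ is a point of vanishing mean oscillation, and the second by construction. So the entire content is to show that $a(r)$ is a Cauchy net as $r\to 0$.

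The central step, and the only delicate point, is a bound on $|a(s)-a(r)|$ for comparable radii $r\le s\le 2r$ in terms of $\bar h$ alone. The naive comparison through a single mollification produces a spurious mean-oscillation term (a uniform average tested against a non-uniform, tent-shaped weight): this term vanishes as $r\to 0$ but need not be summable under vanishing mean oscillation only, so it cannot be fed into a telescoping argument. To avoid it I would bridge the two averages through the common doubly-averaged quantity
\[
P:=\frac{1}{|B_r(\bar x)|}\int_{B_r(\bar x)}(u)_{B_s(y)}\,dy=\frac{1}{|B_s(\bar x)|}\int_{B_s(\bar x)}(u)_{B_r(y)}\,dy,
\]
the two expressions being equal by Fubini and the symmetry $|B_r(\bar x)\cap B_s(w)|=|B_s(\bar x)\cap B_r(w)|$ of the intersection volumes. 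Reading $P$ as an average of the radius-$s$ averages $(u)_{B_s(y)}$ over centres $y\in B_r(\bar x)\subset \bar B_{2s}(\bar x)$, the definition of $\bar h_s(\bar x)$ gives $|P-a(s)|\le 2\bar h_s(\bar x)$; reading it as an average of the radius-$r$ averages $(u)_{B_r(y)}$ over centres $y\in B_s(\bar x)\subset \bar B_{2r}(\bar x)$ (here I use $s\le 2r$), the definition of $\bar h_r(\bar x)$ gives $|P-a(r)|\le 2\bar h_r(\bar x)$. Hence $|a(s)-a(r)|\le 2\bar h_r(\bar x)+2\bar h_s(\bar x)$ whenever $r\le s\le 2r$.

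With this estimate in hand the conclusion is routine. Applying it to consecutive dyadic radii $r_k=2^{-k}r_0$ and summing, the hypothesis $\bar h_r(\bar x)=O(r^{\gamma'})$ makes $\sum_k \bar h_{r_k}(\bar x)$ a convergent (essentially geometric) series, so $(a(r_k))_k$ is Cauchy and converges to some $\ell$. For an arbitrary $r\to 0$ I would pick the dyadic $r_k\in[r,2r]$ and apply the estimate once more to get $|a(r)-a(r_k)|\le 2\bar h_r(\bar x)+2\bar h_{r_k}(\bar x)=O(r^{\gamma'})$, whence $a(r)\to\ell$ along the full net. This supplies the value $\ell$ required above and completes the argument. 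The main obstacle is exactly the construction of the symmetric bridge $P$: it is what lets the cross-scale comparison be controlled by $\bar h$ without ever invoking the mean oscillation, so that the polynomial decay of $\bar h$ can be converted into summability; the role of the vanishing-mean-oscillation hypothesis is then only to identify the limit $\ell$ as the Lebesgue value.
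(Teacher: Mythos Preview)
Your proof is correct and follows the paper's overall strategy: reduce to showing that $r\mapsto(u)_{B_r(\bar x)}$ is Cauchy, then telescope over geometric scales with increments controlled by $\bar h_r$. The one technical difference is in the cross-scale comparison. The paper asserts (via a Fubini argument it does not spell out) that for each $r$ there exists some $r'\in[2r,3r]$ with $|u_{r'}-u_r|\le\bar h_r$, iterates this to reach a scale $r_n\in[R/3,R]$, and then bridges the last gap $r_n\to R$ using the mean-oscillation term $3^d\e(R)$; thus in the paper the Cauchy estimate itself already involves the VMO quantity. Your symmetric double-average $P$ (whose two expressions coincide because $y\mapsto \bar x+w-y$ is an isometry sending $B_r(\bar x)\cap B_s(w)$ onto $B_s(\bar x)\cap B_r(w)$) gives instead a uniform bound $|a(s)-a(r)|\le 2\bar h_r(\bar x)+2\bar h_s(\bar x)$ valid for \emph{all} $r\le s\le 2r$. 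This is a bit cleaner: there is no existential choice of $r'$, the telescoping needs no separate treatment of the final step, and the VMO hypothesis is used only at the very end to upgrade convergence of averages to the Lebesgue-point conclusion.
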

\begin{proof}
It is sufficient to prove that there exists $\lim_{r\to 0}(u)_{B_r(\bar x)}$.
In the following of this proof we will not specify the center $\bar x$ and we will write $u_r$ for $(u)_{B_r(\bar x)}$.

We assume the following elementary fact which follows from Fubini theorem: for every $r>0$ there exists $r' \in [2r,3r]$ such that
\begin{equation*}
|u_{r'}-u_r| \le \bar h_r.
\end{equation*}
For $r>0$ denote by
\begin{equation*}
\e(r):=\fint_{B_{r}}|u-u_{r}|.
\end{equation*}
We notice that if $r \in [r'/3,r']$, then $|u_r-u_{r'}|\le 3^d\e(r')$.
In fact
\begin{equation*}
\begin{split}
\e(r') = &~ \fint_{B_{r'}}|u-u_{r'}| \\
\ge &~ \frac{|B_r|}{|B_{r'}|} \fint_{B_r} |u-u_{r'}| \\
\ge &~ \frac{|B_r|}{|B_{r'}|} |u_r-u_{r'}| \\
= &~ 3^{-d} |u_r-u_{r'}|.
\end{split}
\end{equation*}
We prove that $(u)_{B_r}$ is a Cauchy sequence as $r\to 0$.
Let $0<r<R$. If $r>R/3$ then $|u_r-u_R|\le 3^d\e(R)$, otherwise let $r_1\in [2r,3r]$ be such that $|u_r-u_{r_1}|\le \bar h_r$. Iterating this argument we have that there exist $n\in\N$ and
$r_1,\ldots, r_n$ such that $|u_{r_i}-u_{r+1}|\le \bar h_{r_i}$, $r_i \in [2^ir,3^ir]$ and $u_{r_n}\in [R/3,R]$.
So we have
\begin{equation*}
\begin{split}
|u_r-u_R|&\le |u_r-u_{r_1}| + \sum_{i=1}^{n-1} |u_{r_i}-u_{r_{i+1}}| + |u_{r_n}-u_R| \\
& \le \bar h (r) + \sum_{i=1}^{n-1}\bar h(r_i) + \e(R) \\
& \le C\left(r^{\gamma'} + \sum_{i=1}^{n-1} r_i^{\gamma'}\right) + \e(R) \le C_{\gamma'}R^{\gamma'} + \e(R),
\end{split}
\end{equation*}
which converges to 0 as $R\to 0$ and this proves the lemma. 
\end{proof}

Let $J\subset \tilde J \subset \R^d$ defined by:
\begin{equation*}
\begin{split}
J^c & :=\big\{x\in \R^d: \nu (B_r(x))=o\left(r^{d-1}\right) \mbox{ as }r\to 0 \big\},\\
\tilde J^c&:=\big\{x\in \R^d: \nu(B_r(x))=O\left(r^{d-1 +\alpha}\right) \mbox{ for some }\alpha>0 \mbox{ as }r\to 0  \big\}. 
\end{split}
\end{equation*} 
Let moreover $R\subset \R^d$ the set of Lebesgue points of $u$. In \cite{DLW_structure} is proved that every point in $J^c$ is a
vanishing mean oscillation point of $u$. Therefore it immediately follows by Corollary \ref{C_oscillation} and Lemma \ref{L_polynomial} that 
each point in $\tilde J^c$ is a Lebesgue point of $u$ so that 
\begin{equation}\label{E_inclusions}
\tilde J^c \subset R\subset J^c.
\end{equation}

We state the main result in the following theorem. At this point
the argument of the proof is the same as in \cite{LO_Burgers}. We sketch it here for completeness.
\begin{theorem}
Let $f$ be a flux satisfying Assumption \ref{A_f} and let $u$ be a quasi-solution.
Then the set $R^c$ of non Lebesgue points of $u$ has Hausdorff dimension at most $d-1$.
\end{theorem}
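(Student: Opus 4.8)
The plan is to deduce the bound $\dim_{\H} R^c\le d-1$ from the inclusions \eqref{E_inclusions}. Since these give $R^c\subset\tilde J$, it suffices to prove that $\tilde J$ has Hausdorff dimension at most $d-1$, i.e. that $\H^s(\tilde J)=0$ for every $s>d-1$. First I would fix such an $s$ and set $\alpha:=s-(d-1)>0$. By the very definition of $\tilde J$, for every $x\in\tilde J$ the estimate $\nu(B_r(x))=O(r^{d-1+\alpha})$ fails as $r\to0$, which is to say
\[
\limsup_{r\to0}\frac{\nu(B_r(x))}{r^{s}}=+\infty .
\]
Consequently $\tilde J\subset E_\lambda:=\{x\in\R^d:\ \limsup_{r\to0}\nu(B_r(x))/r^{s}>\lambda\}$ for \emph{every} threshold $\lambda>0$, and the whole content of the argument is now concentrated in a density comparison for the locally finite Radon measure $\nu$.

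The core step is the standard Frostman-type inequality proved via the Vitali $5r$-covering lemma. Localising, I would fix $N\in\N$ and $\delta\in(0,1)$, and for each $x\in E_\lambda\cap B_N(0)$ choose a radius $r(x)\in(0,\delta)$ with $B_{r(x)}(x)\subset B_{N+1}(0)$ and $\nu(B_{r(x)}(x))>\lambda\,r(x)^{s}$ (possible because the $\limsup$ exceeds $\lambda$). The family $\{B_{r(x)}(x)\}$ is a fine cover of $E_\lambda\cap B_N(0)$, so one extracts a countable disjoint subfamily $\{B_{r_i}(x_i)\}_i$ with $E_\lambda\cap B_N(0)\subset\bigcup_i B_{5r_i}(x_i)$. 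Using $\mathrm{diam}\,B_{5r_i}(x_i)=10r_i<10\delta$, the lower density bound, and the disjointness of the balls inside $B_{N+1}(0)$, this yields
\[
\H^{s}_{10\delta}\big(E_\lambda\cap B_N(0)\big)\le\sum_i(10r_i)^{s}\le\frac{10^{s}}{\lambda}\sum_i\nu\big(B_{r_i}(x_i)\big)\le\frac{10^{s}}{\lambda}\,\nu\big(B_{N+1}(0)\big).
\]
Since the right-hand side is independent of $\delta$, letting $\delta\to0$ gives $\H^{s}(E_\lambda\cap B_N(0))\le 10^{s}\lambda^{-1}\nu(B_{N+1}(0))$.

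To conclude I would use $\tilde J\cap B_N(0)\subset E_\lambda\cap B_N(0)$ together with $\nu(B_{N+1}(0))<\infty$ (local finiteness of $\nu$): letting $\lambda\to+\infty$ forces $\H^{s}(\tilde J\cap B_N(0))=0$, and a union over $N\in\N$ gives $\H^{s}(\tilde J)=0$. As $s>d-1$ was arbitrary, $\dim_{\H}\tilde J\le d-1$, hence $\dim_{\H}R^c\le d-1$. I do not expect a genuine obstacle here: all the analytic work has already been absorbed into the quantitative decay of $\nu(B_r)$ that is built into the definition of $\tilde J$ (ultimately Proposition \ref{P_Kant} and Corollary \ref{C_oscillation}), and what remains is the purely measure-theoretic observation that a set on which a Radon measure has infinite upper $s$-density must be $\H^{s}$-null. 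The only point requiring care is the correct invocation of the covering lemma and the localisation needed to keep $\nu$ finite on the relevant balls.
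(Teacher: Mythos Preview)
Your argument is correct and is essentially the same as the paper's: both reduce to the inclusion $R^c\subset\tilde J$ from \eqref{E_inclusions} and then bound $\dim_{\H}\tilde J$ via the standard Vitali/Frostman density argument for the Radon measure $\nu$. The paper packages this by introducing the sets $E_{\alpha,K,R}=\{x\in B_R:\nu(B_r(x))\le Kr^{d-1+\alpha}\ \forall r\in(0,1)\}$, stating that Vitali gives $\H^{d-1+\alpha}(B_R\setminus E_{\alpha,K,R})\lesssim K^{-1}\nu(B_{R+1})$, and then observing $\tilde J=\bigcap_{\alpha,K}E_{\alpha,K}^c$; your $E_\lambda$ plays the role of $B_R\setminus E_{\alpha,\lambda,R}$ and you spell out the covering step that the paper leaves implicit, but the content is identical.
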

\begin{proof}
In view of \eqref{E_inclusions} it is sufficient to check we check that $\tilde J$ has Hausdorff dimension at most $d-1$: 
let $\alpha,K,R>0$ and set
\begin{equation*}
E_{\alpha,K,R}:=\big\{x\in B_R \subset \R^d: \nu(B_r(x))\le Kr^{d-1+\alpha} \, \forall r\in (0,1)\big\}.
\end{equation*}
By Vitali covering theorem it follows that $\mathscr H^{d-1+\alpha}(B_R\setminus E_{\alpha,K,R})\lesssim K^{-1}\nu(B_{R+1})$.
Therefore setting $E_{\alpha,K}:=\bigcup_{R>0}E_{\alpha,K,R}$ we have that $E_{\alpha,K}^c$ has Hausdorff dimension at most $d-1+\alpha$.
Being
\begin{equation*}
\tilde J = \bigcap_{\alpha,K>0}E_{\alpha,K}^c
\end{equation*}
it has Hausdorff dimension at most $d-1$.
\end{proof}
In the last part of this section we notice with a simple example that the inclusion $R\subset J^c$ can be strict:
in particular we provide a quasi-solution to the Burgers equation \eqref{E_Burgers} on $\R^2$ for which the origin does not belong to $J$ and it is not a Lebesgue point of $u$:
\begin{equation}\label{E_Burgers}
\partial_t u + \partial_x\left(\frac{u^2}{2}\right)=0.
\end{equation}
This shows that Property (1') in the introduction is not true in general, the condition $\mathscr H^{d-1}(R^c\setminus J)=0$ would be satisfactory as well, but we cannot prove it or disprove it here.

{\bf{Example.}}
Let $u_0 \in L^\infty(\R)$ be such that 0 is a vanishing mean oscillation point of $u_0$ but not
a Lebesgue point: consider for example $u_0(x)=\sin(\log|\log|x||)$ and let $u_1:[0,+\infty)\times \R$ be the entropy solution to the Cauchy problem for \eqref{E_Burgers} with initial datum $u_0$. Moreover let $u_2:[0,+\infty)\times \R$ be the entropy solution of the Cauchy problem
\begin{equation}\label{E_minus_Burgers}
\begin{cases}
\partial_t u - \partial_x\left(\frac{u^2}{2}\right)=0, \\
u(0,\cdot)=u_0
\end{cases}
\end{equation}
and set 
\begin{equation*}
u(t,x)= \begin{cases}
u_1(t,x) & \mbox{if }t\ge 0; \\
u_2(-t,x) & \mbox{if }t<0.
\end{cases}
\end{equation*}
Being $u_1,u_2\in C([0,+\infty);L^1_\loc(\R))$ it is straightforward to check that $u$ is a quasi solution on the whole $\R^2$.
We now check that the origin does not belong to $J$ and that it is not a Lebesgue point in the two variables $(t,x)$.
Let us denote by
\begin{equation*}
(u_0)_r:=\frac{1}{2r}\int_{-r}^ru_0(x)dx, \qquad \e(u_0,r):= \frac{1}{2r}\int_{-r}^r|u_0(x)-(u_0)_r|dx.
\end{equation*}
By Kruzkov contraction estimate in $L^1(\R)$ we have that for any $r>0$ and any $t\in [0,2r]$ it holds
\begin{equation}\label{E_460}
\frac{1}{4r}\int_{-2r}^{2r}|u(t,x)-(u_0)_{4r}|dx \le \frac{1}{4r}\int_{-4r}^{4r}|u_0(x)-(u_0)_{4r}|dx = 2\e(u_0,4r),
\end{equation}
so that integrating for $t \in[0,2r]$ we get
\begin{equation}\label{E_Leb_point}
\int_0^{2r}\int_{-2r}^{2r}|u(t,x)-(u_0)_{4r}|dxdt \le 16 \e(u_0,4r)r^2.
\end{equation}
Since $(u_0)_r$ is not converging as $r\to 0$ and $\e(u_0,r)\to 0$ as $r\to 0$ this proves that the origin is not a Lebesgue point of $u$.
Moreover it follows from \eqref{E_Leb_point} by Fubini theorem that there exist $s_1\in [-2r,-r], s_2\in[r,2r]$ such that
\begin{equation}\label{E_462}
\int_0^{2r}|u(t,s_1)-(u_0)_{4r}| dt \le 16 \e(u_0,4r)r, \qquad \int_0^{2r}|u(t,s_2)-(u_0)_{4r}| dt \le 16 \e(u_0,4r)r.
\end{equation}
Computing the balance for the entropy $\bar \eta(u)=u^2/2$ on the domain $D:=(0,2r)\times (s_1,s_2)$ we get 
by \eqref{E_460} and \eqref{E_462}
\begin{equation*}
\begin{split}
|\mu_{\bar \eta}(D)| \le &~ \int_{s_1}^{s_2} |\bar \eta(u(2r,x))-\bar \eta(u_0(x))| dx + \int_0^{2r}|\bar q(u(t,s_2))-\bar q(u(t,s_1))|dt \\
\le &~  \int_{s_1}^{s_2} \left( |\bar \eta(u(2r,x))-\bar \eta((u_0)_{4r})| + |\bar \eta((u_0)_{4r})-\bar \eta(u_0(x))| \right) dx \\
& + \int_0^{2r} \left( |\bar q(u(t,s_2))-\bar q((u_0)_{4r})| + |\bar q((u_0)_{4r}) - \bar q(u(t,s_1))| \right) dt \\
\le &~ 48 \e(u_0,4r) r,
\end{split}
\end{equation*}
being $\bar \eta$ and $\bar q(u)=u^3/3$ both 1-Lipschitz functions on $[-1,1]$. The same computation holds for $t<0$ and since for entropy
solutions to Burgers equation and to \eqref{E_minus_Burgers} it holds $-\mu_{\bar \eta}=|\mu_{\bar \eta}|=\nu$ this proves that 
$\nu(B_r(0))=o(r)$, i.e. $0\notin J$.

\section{Lagrangian representation for the time dependent case}\label{S_lagrangian}
In this section we consider the Cauchy problem for the scalar conservation law:
\begin{equation}\label{E_multiD}
\begin{cases}
& u_t + \div_x F(u)=0, \\
& u(0,\cdot)=u_0,
\end{cases}
\end{equation}
with $u:[0,T)\times \R^d\to \R$ a measurable function for some $T>0$, $u_0\in L^1(\R^d)\cap L^\infty(\R^d)$ and $F\in C^2(\R,\R^d)$. 
\begin{definition}
We say that $u\in C([0,T];L^1(\R^d))$ is a \emph{weak solution with finite entropy production} if it solves \eqref{E_multiD} in the sense of
distributions and for every entropy $\eta\in C^2(\R)$ such that $\eta''(v)\ge 0$ and corresponding flux $Q:\R\to \R^d$ satisfying 
$Q'=\eta'F'$ the distribution
\begin{equation}\label{E_diss}
\mu_Q:=\eta(u)_t+\div_x Q(u) 
\end{equation}
is a finite Radon measure in $[0,T]\times \R^d$.
\end{definition}
\begin{remark}
We notice that the existence of an $L^1$ continuous representative in time of a weak solution $u$ with finite entropy production can be deduced
from \eqref{E_diss} under the assumption of genuine nonlinearity of the flux $F$ (see \cite{dafermos_book_4}).
\end{remark}

In order to keep the presentation simpler we restrict our attention to the following class of solutions.
\begin{assumption}\label{A_u}
The weak solution $u:[0,T)\times \R^d \to \R$ to \eqref{E_multiD} is bounded, nonnegative and $\|u_0\|_{L^1(\R^d)}=1$.
\end{assumption}

In this context the kinetic formulation has the following form:
\begin{proposition}\label{P_kin_t}
Let $u$ be a weak solution with finite entropy production and let $\chi:[0,T)\times\R^d\times [0,+\infty)\to \{0,1\}$ be
\begin{equation*}
\chi(t,x,v):=
\begin{cases}
1 & \mbox{if }0<v\le u(t,x), \\
0 & \mbox{otherwise}.
\end{cases}
\end{equation*}
Then there exists a finite Radon measure $\mu\in \M([0,T)\times\R^d\times \R)$ such that
\begin{equation}\label{E_kin_t}
\partial_t \chi + f'(v)\cdot \nabla_x\chi=\partial_v\mu \quad \mbox{in }\D'_{t,x,v}.
\end{equation}
\end{proposition}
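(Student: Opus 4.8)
The plan is to adapt the classical Lions--Perthame--Tadmor construction of the kinetic measure to the present setting, using only that $u$ is a weak solution and that \emph{every} convex entropy has finite production. Since $u$ is bounded and nonnegative, $\chi$ is supported in $[0,T)\times\R^d\times[0,M]$ with $M:=\|u\|_{L^\infty}$, and I will exploit this throughout. Writing $\Lambda:=\partial_t\chi+f'(v)\cdot\nabla_x\chi$ (with $f=F$, as in the statement) for the distribution on $(0,T)\times\R^d\times\R$ appearing on the left of \eqref{E_kin_t}, the goal becomes to show that $\Lambda=\partial_v\mu$ for a finite Radon measure $\mu$.

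\emph{Step 1: entropy identities.} For any convex $\eta\in C^2(\R)$ with $\eta(0)=0$ and associated flux $Q$ (so $Q'=\eta'f'$, $Q(0)=0$), the pointwise formulas $\eta(u)=\int_0^\infty\eta'(v)\chi\,dv$ and $Q(u)=\int_0^\infty\eta'(v)f'(v)\chi\,dv$, valid because $\chi=\mathbbm{1}_{\{0<v\le u\}}$, show that testing $\Lambda$ against $\eta'(v)\varphi(t,x)$ reproduces the entropy production, namely $\langle\Lambda,\eta'\varphi\rangle=\langle\mu_Q,\varphi\rangle$ for every $\varphi\in C_c^\infty((0,T)\times\R^d)$. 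This pairing is legitimate after inserting a $v$-cutoff equal to $1$ on $[0,M]$, which does not affect the value since $\Lambda$ is supported in $v\in[0,M]$.

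\emph{Step 2: producing the primitive $\mu$.} Taking $\eta(v)=v$, so $\eta'\equiv1$ and $Q=F$, the identity of Step 1 becomes $\langle\Lambda,\varphi\rangle=\langle\partial_t u+\div_x F(u),\varphi\rangle=0$, which is exactly the weak formulation of \eqref{E_multiD}. Hence $\Lambda$ annihilates every $v$-independent test function, and therefore $\Lambda=\partial_v\mu$ for some distribution $\mu$; moreover the same computation gives $\int_\R\Lambda\,dv=\partial_t u+\div_x F(u)=0$, which lets me normalize the primitive so that $\mu$ too is supported in $v\in[0,M]$. Re-running Step 1 for a general convex $\eta$ and integrating by parts in $v$ produces the duality relation $\langle\mu,\varphi\,\eta''\rangle=-\langle\mu_Q,\varphi\rangle$, tying $\mu$ to the \emph{finite} measures $\mu_Q$ through the free choice of $\eta''$.

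\textbf{Main obstacle: upgrading $\mu$ from distribution to finite measure.} Everything above is elementary; the technical heart is to prove the bound $|\langle\mu,g\rangle|\le C_K\|g\|_{C^0}$ for $g\in C_c^\infty(K\times\R)$, which by Riesz representation yields the measure. The strategy is to recover the $v$-profile of $\mu$ by letting $\eta''=\beta$ range over bump functions: any $\beta\in C_c(\R)$ is the second derivative of some $\eta$, so the relation of Step 2 controls $\langle\mu,\varphi\beta\rangle$ by $\|\varphi\|_\infty$ times the total variation of the corresponding production measure $\mu_{Q_\beta}$, and a partition-of-unity argument promotes $\mu$ to a locally finite measure on $(0,T)\times\R^d\times\R$. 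Global finiteness of $\mu$ (needed because the statement asks for a finite, not merely locally finite, measure) then follows by applying the estimate with a fixed quadratic-type entropy together with the $L^1\cap L^\infty$ bounds of Assumption \ref{A_u} and $u_0\in L^1(\R^d)$. I expect the delicate point to be the \emph{uniformity} of these total-variation bounds as $\beta$ varies, which is precisely where finite entropy production for all convex entropies simultaneously is indispensable. Alternatively one can sidestep this by reducing to Proposition \ref{P_kin}: regard $(t,x)$ as a point of $\R^{d+1}$ with augmented flux $\tilde f=(\mathbf I,F)$, as in the Remark, at the price of handling separately the boundary $t=0$ and the $L^1$-in-time continuity of $u$.
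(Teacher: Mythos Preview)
The paper does not give a proof of this proposition: both Proposition~\ref{P_kin} and Proposition~\ref{P_kin_t} are stated as known facts about the kinetic formulation, with the construction going back to \cite{LPT_kinetic}. So there is no ``paper's own proof'' to compare against; your task is really to reproduce the standard argument, and your outline does this correctly.

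A couple of small remarks on your sketch. In Step~2 you write that ``any $\beta\in C_c(\R)$ is the second derivative of some $\eta$'', which is of course true, but the finite--entropy--production hypothesis is only for \emph{convex} $\eta$, i.e.\ $\beta\ge 0$. The standard fix is to split $\beta=\beta^+-\beta^-$ (smoothly) and use that $\mu_{Q_{\beta^+}}$ and $\mu_{Q_{\beta^-}}$ are both finite measures; the difference is then a finite signed measure controlling $\langle\mu,\varphi\beta\rangle$. You seem aware of this when you speak of ``finite entropy production for all convex entropies simultaneously'', but it is worth making explicit since otherwise the bound $|\langle\mu,\varphi\beta\rangle|\le \|\varphi\|_\infty\cdot|\mu_{Q_\beta}|$ does not directly follow. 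For the global finiteness of $\mu$ (as opposed to local), your appeal to Assumption~\ref{A_u} is the right move: with $u$ bounded and $u_0\in L^1$, a single convex entropy such as $\bar\eta(u)=u^2/2$ already has finite total production on $(0,T)\times\R^d$, and since $\mu$ is supported in $v\in[0,M]$ the relation $\langle\mu,\varphi\rangle=-\langle\mu_{\bar Q},\varphi\rangle$ for $\eta''\equiv 1$ on $[0,M]$ gives the bound. Your alternative route via Proposition~\ref{P_kin} with augmented flux $(\mathbf I,F)$ is also perfectly viable and is in fact how the paper relates the two settings in the Remark after Proposition~\ref{P_kin}.
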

We denote by $\nu$ the projection on $[0,T)\times \R^d$ of the total variation $|\mu|$ of $\mu$ and we notice that the $L^1$ continuity in time
of $u$ implies that $(\pi_t)_\sharp \nu \in \mathcal M([0,T))$ has no atoms.

In order to introduce the notion of Lagrangian representation we set some notation: we denote by
\begin{equation*}
\begin{split}
\tilde \Gamma &:= \BV([0,T);\R^d\times [0,+\infty)), \\
\Gamma& :=\left\{\gamma=(\gamma^1,\gamma^2)\in \tilde \Gamma: \gamma^1 \mbox{ is Lipschitz}
\right\}.
\end{split}
\end{equation*}
In order to fix a representative we will also assume that $\gamma$ is continuous from the right.
We will consider on $\Gamma$ and $\tilde \Gamma$ the topology $\tau$ obtained as the product of the uniform convergence on compact sets topology for $\gamma^1$ and the $L^1$
topology for $\gamma^2$.

For every $t\in [0,T)$ let $e_t:\tilde \Gamma\to \R^d\times [0,+\infty)$ be the evaluation map:
\begin{equation*}
e_t(\gamma):=\lim_{s\to t^+}\gamma(s)=\gamma(t).
\end{equation*}
\begin{definition}
Let $u$ be a weak solution to \eqref{E_multiD} with finite entropy production satisfying Assumption \ref{A_u}. We say that $\omega \in \M(\Gamma)$ is a \emph{Lagrangian representation} of $u$ if the following conditions hold:
\begin{enumerate}
\item for every $t\in [0,T)$ it holds
\begin{equation}\label{E_repr_formula}
(e_t)_\sharp \omega = \mathscr L^{d+1}\llcorner E_{u(t)};
\end{equation}
\item the measure $\omega$ is concentrated on the set of curves $\gamma\in \Gamma$ such that
\begin{equation}\label{E_characteristic}
\dot\gamma^1(t)=f'(\gamma^2(t)) \quad \mbox{for a.e. }t\in [0,T);
\end{equation}
\item 
\begin{equation}\label{E_reg}
\int_\Gamma \TV_{[0,T)} \gamma^2 d\omega(\gamma) <\infty.
\end{equation}
\end{enumerate}
\end{definition}
A few comments are in order: 
the condition \eqref{E_repr_formula} encodes the link between the measure $\omega$ and the weak solution $u$, while \eqref{E_characteristic}
says that the mass is transported with the characteristic speed.
Finally \eqref{E_reg} is just a regularity requirement and it is related to the finiteness of the entropy production. This connection will be made more explicit in the propositions \ref{P_kin_meas} and \ref{P_any_ent}.

With the same notation as in Section \ref{S_estimate} we can state in this setting the analogous of Theorem \ref{T_Wass1}, exploiting the special role of the variable $t$ and the conservation of $\|u(t)\|_{L^1(\R^d)}$.
\begin{proposition}
Let $u$ be a weak solution to \eqref{E_multiD} with finite entropy production satisfying Assumption \ref{A_u}. Let moreover $\bar s>0$ and 
$\phi \in C^1_c(\R^d\times (0,+\infty))$ be such that $\pi_x(\supp \phi) \subset B_R(\bar x)$ for some $\bar x\in \R^d$ and $R>0$. Then for every $t>0$ it holds
\begin{equation}\label{E_Wass_t}
\int_{\R^d\times [0,+\infty)}\phi(x,v)(\chi_{E_{u(t+\bar s)}}-\chi_{\FT(E_{u(t)},\bar s)})dxdv \le \left(\|\partial_v\phi\|_{L^\infty} +\bar s\|f''\|_{L^\infty}\|\nabla_x \phi\|_{L^\infty}\right)\nu((t,t+\bar s)\times B_{R+ \|f'\|_\infty\bar s}(\bar x)).
\end{equation}
\end{proposition}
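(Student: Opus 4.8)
The plan is to repeat almost verbatim the argument of Theorem \ref{T_Wass1}, the one decisive change being that the auxiliary evolution variable $s$ now coincides with the genuine time. Fix $t>0$ and for $s\in[0,\bar s]$ set
\[
\chi^1(s,\cdot,\cdot):=\chi(t+s,\cdot,\cdot)=\chi_{E_{u(t+s)}},\qquad \chi^2(s,\cdot,\cdot):=\chi_{\FT(E_{u(t)},s)}.
\]
By the kinetic formulation \eqref{E_kin_t}, translated by $t$ in time, the first function solves $\partial_s\chi^1+f'(v)\cdot\nabla_x\chi^1=\partial_v\tilde\mu$ in $\D'$, where $\tilde\mu$ denotes the restriction of $\mu$ to $(t,t+\bar s)\times\R^d\times\R$ shifted by $-t$ so as to live on $(0,\bar s)$; by the definition of the free-transport operator the second solves $\partial_s\chi^2+f'(v)\cdot\nabla_x\chi^2=0$. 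At $s=0$ both functions equal $\chi_{E_{u(t)}}$, so $\tilde\chi:=\chi^1-\chi^2$ vanishes there. The point that makes the present estimate sharper than \eqref{E_main} is precisely that here $\tilde\mu$ is the measure $\mu$ itself (restricted in time), and not the product $\mathscr L^1\times\mu$ that appeared in Theorem \ref{T_Wass1}: this is why the factor $\bar s$ multiplying $\|\partial_v\phi\|_{L^\infty}$ and the factor $1/2$ in the second term both drop out.

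With $\psi(s,x,v):=\phi(x+f'(v)(\bar s-s),v)$ as before, the same elementary computation yields $\partial_s(\tilde\chi\psi)+f'(v)\cdot\nabla_x(\tilde\chi\psi)=\psi\,\partial_v\tilde\mu$ in $\D'$. I would then set $g(s):=\int_{\R^d\times(0,+\infty)}\tilde\chi(s)\psi(s)\,dxdv$, integrate this identity in $(x,v)$ so that the transport term disappears by compact support, and read off that the distributional derivative of $g$ is the pushforward to the $s$-line of $-\partial_v\psi\,d\tilde\mu$. Since $u\in C([0,T];L^1)$, the map $s\mapsto\chi^1(s)$ is $L^1$-continuous and $\chi^2$ is continuous, so $g$ is continuous; using moreover that $(\pi_t)_\sharp\nu$ has no atoms I can apply the fundamental theorem of calculus and, recalling $g(0)=0$ and $\psi(\bar s,\cdot,\cdot)=\phi$, obtain
\[
\int_{\R^d\times[0,+\infty)}\phi\,(\chi_{E_{u(t+\bar s)}}-\chi_{\FT(E_{u(t)},\bar s)})\,dxdv=g(\bar s)-g(0)=-\int_{(t,t+\bar s)\times\R^d\times[0,+\infty)}\partial_v\psi\,d\mu.
\]

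It then remains only to estimate $\partial_v\psi$. From $\partial_v\psi=\partial_v\phi(\cdot)+\nabla_x\phi(\cdot)\cdot f''(v)(\bar s-s)$ together with $0\le\bar s-s\le\bar s$ I get the pointwise bound $|\partial_v\psi|\le\|\partial_v\phi\|_{L^\infty}+\bar s\|f''\|_{L^\infty}\|\nabla_x\phi\|_{L^\infty}$, while the $x$-support of $\partial_v\psi$ is contained in $B_{R+\|f'\|_\infty\bar s}(\bar x)$ because $|f'(v)(\bar s-s)|\le\|f'\|_\infty\bar s$. Bounding the last integral against $|\mu|$ and using $|\mu|((t,t+\bar s)\times B\times\R)=\nu((t,t+\bar s)\times B)$ delivers exactly \eqref{E_Wass_t}.

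The only genuinely delicate point I anticipate is the regularity of $g$ and the legitimacy of the fundamental theorem of calculus. Unlike in Theorem \ref{T_Wass1}, where $\tilde\mu=\mathscr L^1\times\mu$ forced $g\in C^1$, here $g$ is merely continuous with a measure derivative, so one must verify that $g$ has no jumps and that the pairing of $g'$ with $\mathbf 1_{(0,\bar s)}$ is well posed; this is precisely what the absence of atoms of $(\pi_t)_\sharp\nu$, noted after Proposition \ref{P_kin_t}, guarantees.
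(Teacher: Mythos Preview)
Your proposal is correct and follows exactly the approach the paper intends: the paper explicitly states that the proof is analogous to that of Theorem \ref{T_Wass1} and omits it, and you have carried out precisely this analogy, correctly identifying that the crucial difference is that $\tilde\mu$ is now the genuine kinetic measure restricted in time rather than $\mathscr L^1\times\mu$, which accounts for the modified constants in \eqref{E_Wass_t}. Your observation about the regularity of $g$ is also on point; the continuity of $g$ (from $u\in C([0,T];L^1)$) together with its distributional derivative being a finite measure already suffices for the fundamental theorem of calculus, so the appeal to the non-atomicity of $(\pi_t)_\sharp\nu$ is slightly redundant but harmless.
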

We omit the proof of this proposition since it is analogous to the proof of Theorem \ref{T_Wass1}.

We are going to consider \eqref{E_Wass_t} for small $\bar s$. 
In view of our interpretation through Proposition \ref{P_duality} the additional factor $\bar s$ in the second term of the right hand side
corresponds to a different behavior of the horizontal and vertical displacements. This is why we are going to consider anisotropic distances.

Let $L>0$; we denote by
\begin{equation*}
d_L((x_1,v_1),(x_2,v_2)):=L|x_1-x_2|+|v_1-v_2|.
\end{equation*}
We set $X=\R^d\times [0,+\infty)$ and we denote by $W_1^L$ the Wasserstein distance on $\mathcal P((X,d_L))$.

\begin{corollary}\label{C_map}
Let $u$ be a weak solution to \eqref{E_multiD} with finite entropy production satisfying Assumption \ref{A_u}. Let moreover $L>0$ and $t,\bar s\ge 0$ be
such that 
\begin{equation}\label{E_cond_s}
\bar s \le \frac{1}{\|f''\|_{L^\infty}L^2}.
\end{equation}
Then there exists $T=(T^1,T^2):\R^d\times [0,+\infty) \to \R^d\times [0,+\infty)$ such that
\begin{equation}\label{E_est_T}
\begin{split}
& T_\sharp \left(\mathscr L^{d+1}\llcorner \FT(E_{u(t)},\bar s)\right) = \mathscr L^{d+1}\llcorner E_{u(t+\bar s)}, \\
& \int_{\FT(E_{u(t)},\bar s)}\left( L|T^1(x,v)-x| + |T^2(x,v)-v|\right)dxdv \le \left( 1+\frac{1}{L}\right)\nu((t,t+\bar s)\times\R^d).
\end{split}
\end{equation}
\end{corollary}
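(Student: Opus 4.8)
The plan is to combine the weak estimate from the proposition just above (the time-dependent analogue of Theorem \ref{T_Wass1}) with the duality formula (Proposition \ref{P_duality}) and the existence of an $L^1$-optimal transport map (Theorem \ref{T_opt_map}). The key observation is that the anisotropic distance $d_L$ is exactly the distance induced by a convex norm on $\R^{d+1}$, namely $|(x,v)|_{D*}=L|x|+|v|$, so Theorem \ref{T_opt_map} applies once we check that the two measures are absolutely continuous, are probability measures, and have finite $W_1^L$-distance.

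First I would identify the two measures. Set $\mu_1:=\mathscr L^{d+1}\llcorner \FT(E_{u(t)},\bar s)$ and $\mu_2:=\mathscr L^{d+1}\llcorner E_{u(t+\bar s)}$. Since $u$ satisfies Assumption \ref{A_u}, both $u(t)$ and $u(t+\bar s)$ are nonnegative $L^1$ functions with $\|u(t)\|_{L^1}=\|u(t+\bar s)\|_{L^1}=1$ (conservation of mass), and the free transport operator $\FT$ preserves the $(d+1)$-dimensional Lebesgue measure of subgraphs because it acts as a shear $(x,v)\mapsto(x+f'(v)\bar s,v)$ which has unit Jacobian. Hence $\mu_1$ and $\mu_2$ are both probability measures on $X=\R^d\times[0,+\infty)$, and $\mu_1\ll\mathscr L^{d+1}$, so the hypotheses of Theorem \ref{T_opt_map} are met provided $W_1^L(\mu_1,\mu_2)<\infty$ (which also gives the desired integral bound once established).

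The heart of the argument is the estimate $W_1^L(\mu_1,\mu_2)\le(1+1/L)\,\nu((t,t+\bar s)\times\R^d)$. Here I would apply Proposition \ref{P_duality}: it suffices to bound $\int\phi\,d\mu_1-\int\phi\,d\mu_2$ uniformly over $\phi$ with $\|\phi\|_{\Lip}\le 1$ with respect to $d_L$, i.e.\ $|\partial_v\phi|\le 1$ and $|\nabla_x\phi|\le 1/L$ (a $1$-Lipschitz function for $d_L$ has vertical gradient bounded by $1$ and horizontal gradient bounded by $1/L$). Feeding these bounds into the right-hand side of \eqref{E_Wass_t} gives
\begin{equation*}
\int_X\phi\,(\chi_{E_{u(t+\bar s)}}-\chi_{\FT(E_{u(t)},\bar s)})\,dxdv\le\left(1+\bar s\|f''\|_{L^\infty}\tfrac{1}{L}\right)\nu((t,t+\bar s)\times B_{R+\|f'\|_\infty\bar s}(\bar x)),
\end{equation*}
and the smallness condition \eqref{E_cond_s}, namely $\bar s\|f''\|_{L^\infty}L^2\le 1$, forces $\bar s\|f''\|_{L^\infty}/L\le 1/L^{\,\text{?}}$; more precisely $\bar s\|f''\|_{L^\infty}\le 1/L^2$ yields $\bar s\|f''\|_{L^\infty}/L\le 1/L^3\le 1/L$, so the prefactor is at most $1+1/L$. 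A technical point is that \eqref{E_Wass_t} requires compactly supported $\phi$, whereas the duality bound must hold for all $1$-Lipschitz $\phi$; I would handle this by a standard truncation/exhaustion argument, using that both measures are probabilities and $\nu$ is a finite measure so the local balls $B_{R+\|f'\|_\infty\bar s}(\bar x)$ can be replaced by all of $\R^d$ in the limit.

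\emph{The main obstacle I anticipate} is not the duality computation itself but verifying finiteness of $W_1^L$ and justifying the passage from compactly supported test functions to general Lipschitz functions, since a priori the transport cost could be infinite if mass escapes to infinity. Once $W_1^L(\mu_1,\mu_2)\le(1+1/L)\,\nu((t,t+\bar s)\times\R^d)<\infty$ is secured, Theorem \ref{T_opt_map} produces the map $T=(T^1,T^2)$ with $T_\sharp\mu_1=\mu_2$ realizing the optimal cost, and the cost identity reads exactly as the integral inequality in \eqref{E_est_T}, completing the proof.
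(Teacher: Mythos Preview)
Your overall strategy is exactly the paper's: apply Kantorovich duality (Proposition~\ref{P_duality}) to the estimate \eqref{E_Wass_t}, then invoke Theorem~\ref{T_opt_map}. There is, however, a slip in one step that makes the computation fail as written.

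You identify the $1$-Lipschitz condition for $d_L$ incorrectly. A function $\phi$ that is $1$-Lipschitz with respect to $d_L((x_1,v_1),(x_2,v_2))=L|x_1-x_2|+|v_1-v_2|$ satisfies $\|\nabla_x\phi\|_{L^\infty}\le L$ (not $1/L$) and $\|\partial_v\phi\|_{L^\infty}\le 1$; freezing $v$ and varying $x$ makes this clear. With your bound $|\nabla_x\phi|\le 1/L$ the duality supremum computes $W_1^{1/L}$, i.e.\ the cost $\tfrac{1}{L}|T^1-x|+|T^2-v|$, which is not the quantity in \eqref{E_est_T}. Moreover your closing inequality ``$1/L^3\le 1/L$'' needs $L\ge 1$, which is not assumed. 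With the correct bounds inserted into \eqref{E_Wass_t} the prefactor is $1+\bar s L\|f''\|_{L^\infty}$, and \eqref{E_cond_s} gives $\bar s L\|f''\|_{L^\infty}\le 1/L$ directly, so the prefactor is at most $1+1/L$ for every $L>0$. After this correction your argument coincides with the paper's proof.
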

\begin{proof}
Let $\phi\in C^1(\R^d\times (0,+\infty))$ be 1-Lipschitz with respect to $d_L$. This is equivalent to require that $\|\nabla_x\phi\|_{L^\infty}\le L$ and 
$\|\partial_v\phi\|_{L^\infty}\le 1$. From \eqref{E_Wass_t} it follows that
\begin{equation*}
\sup_{\Lip_{d_L}(\phi)\le 1} \int_{\R^d\times [0,+\infty)}\phi(x,v)(\chi_{E_{u(t+\bar s)}}-\chi_{\FT(E_{u(t)},\bar s)})dxdv \le 
\left(1 + \bar s L\|f''\|_{L^\infty}\right)\nu((t,t+\bar s)\times\R^d).
\end{equation*}
By Proposition \ref{P_duality} and \eqref{E_cond_s} it follows that
\begin{equation*}
W_1^L(\FT(E_{u(t)},\bar s), E_{u(t+\bar s)})\le \left( 1+\frac{1}{L}\right)\nu((t,t+\bar s)\times\R^d).
\end{equation*}
The conclusion follows from Theorem \ref{T_opt_map}.
\end{proof}

\subsection{Approximation scheme}
In this part we build an approximate Lagrangian representation by means of the free transport operator and Corollary \ref{C_map}.

Given $T>0$ and $n\in \N$ we set $\bar s_n= 2^{-n}T$ and $L_n= (\bar s_n \|f''\|_{L^\infty})^{-1/2}$ so that $\bar s_n$ and $L_n$ satisfy
\eqref{E_cond_s}. For every $k=1,\ldots, 2^n-1$ let $T_k$ be an optimal transport map from 
$\mathscr L^{d+1}\llcorner \FT(E_{u((k-1)\bar s_n)},\bar s_n)$ to $\mathscr L^{d+1}\llcorner E_{u(k\bar s_n)}$ given by Corollary \ref{C_map}. 
For every $(x,v)\in E_{u_0}$ we build a trajectory $\gamma_{(x,v)}=(\gamma_{(x,v)}^1,\gamma_{(x,v)}^2):[0,T)\to \R^d \times [0,+\infty)$.
First we define inductively $\gamma_{(x,v)}(k\bar s_n)$ for $k=0,\ldots, 2^n-1$. We set
\begin{equation}\label{E_def_gamma1}
\begin{cases}
\gamma_{(x,v)}(0)=(x,v), \\
\gamma_{(x,v)}(k\bar s_n)= T_k( \gamma_{(x,v)}((k-1)\bar s_n) + (\bar s_n f'(\gamma^2_{(x,v)}((k-1)\bar s_n)),0)) \quad \mbox{for }k=1,\ldots, 2^n-1.
\end{cases}
\end{equation}
Next we set for $t\in (k\bar s_n,(k+1)\bar s_n)$ and $k=0,\ldots, 2^n-1$
\begin{equation}\label{E_def_gamma2}
\gamma_{(x,v)}(t)=\gamma_{(x,v)}(k\bar s_n) + ((t-k\bar s_n)f'(\gamma^2_{(x,v)}(k\bar s_n)),0).
\end{equation}

We now define $\omega_n\in \mathcal P(\tilde \Gamma)$ by
\begin{equation}\label{E_def_omega}
\omega_n:=\int_{E_{u_0}}\delta_{\gamma_{(x,v)}}dxdv,
\end{equation}
where $\gamma_{(x,v)}$ is defined by \eqref{E_def_gamma1} and \eqref{E_def_gamma2}.

\begin{lemma}\label{L_hor_vert}
Let $\omega_n$ be defined in \eqref{E_def_omega}. Then the following integral estimates hold:
\begin{enumerate}
\item
\begin{equation}\label{E_est_hor}
\int_{\tilde \Gamma}\sup_{t\in [0,T)}\left|\gamma^1(t)-\gamma^1(0) - \int_0^t f'(\gamma^2(s))ds\right|d\omega_n(\gamma) \le 
\frac{1}{L_n}\left(1+\frac{1}{L_n}\right)\nu((0,T)\times \R^d).
\end{equation}
\item
\begin{equation}\label{E_est_vert}
\int_{\tilde \Gamma}\TV_{[0,T)}\gamma^2 d\omega_n(\gamma) \le \left(1+\frac{1}{L_n}\right)\nu((0,T)\times \R^d).
\end{equation}
\end{enumerate}
\end{lemma}
\begin{proof}
For every $t\in [0,T)$ and $(x,v)\in E_{u_0}$, by the construction of the curves $\gamma_{(x,v)}$, it holds
\begin{equation*}
\begin{split}
\left|\gamma_{(x,v)}^1(t) - \gamma_{(x,v)}^1(0)- \int_0^tf'(\gamma_{(x,v)}^2(s))ds\right| \le &~ 
	\sum_{k=1}^{2^n-1} \left| \gamma_{(x,v)}^1(k\bar s_n) - \gamma_{(x,v)}^1(k\bar s_n-)\right| \\
= &~ \sum_{k=1}^{2^n-1} \left| T_k^1(\gamma_{(x,v)}(k\bar s_n-))- \gamma_{(x,v)}^1(k\bar s_n-)\right|,
\end{split}
\end{equation*}
where we denoted by $ \gamma_{(x,v)}^1(k\bar s_n-):= \lim_{s\to k\bar s_n-}\gamma_{(x,v)}^1(s)$.
Therefore by definition of $\omega_n$ and \eqref{E_est_T} it holds
\begin{equation*}
\begin{split}
\int_{\tilde \Gamma}\sup_{t\in [0,T)} \bigg|\gamma^1(t) -\gamma^1(0) -& \int_0^t f'(\gamma^2(s))ds\bigg|  d\omega_n(\gamma) =  \\
= & \int_{E_{u_0}} \sup_{t \in [0,T)}\left|\gamma_{(x,v)}^1(t) - \gamma_{(x,v)}^1(0)- \int_0^tf'(\gamma_{(x,v)}^2(s))ds\right| dxdv \\
\le &  \sum_{k=1}^{2^n-1} \int_{E_{u_0}}\left| T_k^1(\gamma_{(x,v)}(k\bar s_n-))- \gamma_{(x,v)}^1(k\bar s_n-)\right| dxdv \\
\le &  \sum_{k=1}^{2^n-1} \frac{1}{L_n}\left( 1+\frac{1}{L_n}\right)\nu (((k-1)\bar s_n,k\bar s_n)\times \R^d) \\
\le & \frac{1}{L_n}\left(1+\frac{1}{L_n}\right)\nu((0,T)\times \R^d).
\end{split}
\end{equation*}
This proves \eqref{E_est_hor}  and similarly we get \eqref{E_est_vert}:
\begin{equation*}
\begin{split}
\TV_{[0,T)}\gamma_{(x,v)}^2= &~ \sum_{k=1}^{2^n-1} \left| \gamma_{(x,v)}^2(k\bar s_n) - \gamma_{(x,v)}^2(k\bar s_n-)\right| \\
= &~ \sum_{k=1}^{2^n-1} \left| T_k^2(\gamma_{(x,v)}(k\bar s_n-))- \gamma_{(x,v)}^2(k\bar s_n-)\right|.
\end{split}
\end{equation*}
Integrating this with respect to $\mathscr L^{d+1}\llcorner E_{u_0}$ we get \eqref{E_est_vert} by \eqref{E_est_T}.
\end{proof}

\begin{lemma}\label{L_est_st}
For every $0\le s\le t<T$ it holds
\begin{equation*}
W_1((e_t)_\sharp \omega_n, (e_s)_\sharp \omega_n)\le \|f'\|_{L^\infty}|t-s| + 2\nu\left(\left(s-\frac{T}{2^n},t\right)\times \R^d\right).
\end{equation*}
\end{lemma}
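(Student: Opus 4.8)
The plan is to bound the Wasserstein distance by the cost of the transport plan induced by following each single curve. Since $\omega_n=\int_{E_{u_0}}\delta_{\gamma_{(x,v)}}\,dx\,dv$, the map $\gamma\mapsto(e_t(\gamma),e_s(\gamma))=(\gamma(t),\gamma(s))$ pushes $\omega_n$ forward to an admissible plan between $(e_t)_\sharp\omega_n$ and $(e_s)_\sharp\omega_n$, so its cost provides the upper bound
\begin{equation*}
W_1((e_t)_\sharp\omega_n,(e_s)_\sharp\omega_n)\le\int_{E_{u_0}}\Big(|\gamma_{(x,v)}^1(t)-\gamma_{(x,v)}^1(s)|+|\gamma_{(x,v)}^2(t)-\gamma_{(x,v)}^2(s)|\Big)\,dx\,dv,
\end{equation*}
where I read $W_1$ as the distance associated with $d((x_1,v_1),(x_2,v_2))=|x_1-x_2|+|v_1-v_2|$, that is the case $L=1$ of the anisotropic distances of Section \ref{S_lagrangian}.

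Next I would exploit the explicit structure \eqref{E_def_gamma1}--\eqref{E_def_gamma2} of the curves. On each interval $(k\bar s_n,(k+1)\bar s_n)$ the component $\gamma^1$ moves at the constant speed $f'(\gamma^2(k\bar s_n))$, of modulus at most $\|f'\|_{L^\infty}$, while $\gamma^2$ is constant; all discontinuities occur at the grid times $k\bar s_n$ and are produced by the maps $T_k$. Splitting the increment of $\gamma^1$ over $[s,t]$ into its absolutely continuous part and its jumps, observing that the continuous part sums to at most $\|f'\|_{L^\infty}(t-s)$ and that $\gamma^2$ varies only through jumps, the triangle inequality gives for every $(x,v)\in E_{u_0}$
\begin{equation*}
|\gamma^1(t)-\gamma^1(s)|+|\gamma^2(t)-\gamma^2(s)|\le\|f'\|_{L^\infty}(t-s)+\!\!\sum_{k:\,s<k\bar s_n\le t}\!\!\Big(|T_k^1(\gamma(k\bar s_n-))-\gamma^1(k\bar s_n-)|+|T_k^2(\gamma(k\bar s_n-))-\gamma^2(k\bar s_n-)|\Big),
\end{equation*}
the sum ranging over the grid times in $(s,t]$ because of the right-continuity convention on $\gamma$.

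Finally I would integrate over $E_{u_0}$ and bound each jump term via \eqref{E_est_T}. Exactly as in Lemma \ref{L_hor_vert}, the law of $\gamma(k\bar s_n-)$ under $\mathscr L^{d+1}\llcorner E_{u_0}$ is $\mathscr L^{d+1}\llcorner\FT(E_{u((k-1)\bar s_n)},\bar s_n)$, so \eqref{E_est_T} with $L=L_n$ controls $\int_{E_{u_0}}(L_n|T_k^1-\gamma^1|+|T_k^2-\gamma^2|)\,dx\,dv$ by $(1+1/L_n)\,\nu(((k-1)\bar s_n,k\bar s_n)\times\R^d)$. For $n$ large enough $L_n\ge1$, hence $|T_k^1-\gamma^1|\le L_n|T_k^1-\gamma^1|$ and each jump contributes at most $2\,\nu(((k-1)\bar s_n,k\bar s_n)\times\R^d)$. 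The intervals $((k-1)\bar s_n,k\bar s_n)$ are pairwise disjoint and those indexed by $s<k\bar s_n\le t$ have union contained in $(s-T/2^n,t)$, so summing produces $2\,\nu((s-T/2^n,t)\times\R^d)$, which together with the horizontal term yields the claim.

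The only delicate point is the bookkeeping of the grid times: identifying precisely which jumps lie in $(s,t]$, checking that the associated time intervals are disjoint and contained in $(s-T/2^n,t)$ (which accounts for the shift by $T/2^n=\bar s_n$), and thereby passing from the sum over intervals to $\nu$ of their union without double counting. Everything else is the coupling-plus-triangle-inequality computation already performed in Lemma \ref{L_hor_vert}.
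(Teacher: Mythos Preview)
Your proof is correct and follows essentially the same strategy as the paper: decompose the displacement over $[s,t]$ into the free-transport drift on each grid interval (contributing $\|f'\|_{L^\infty}(t-s)$) and the jumps at grid times $k\bar s_n\in(s,t]$ (each bounded via \eqref{E_est_T} by $2\nu(((k-1)\bar s_n,k\bar s_n)\times\R^d)$ once $L_n\ge1$), with the same bookkeeping $(k_s\bar s_n,k_t\bar s_n)\subset(s-T/2^n,t)$. The only cosmetic difference is that the paper applies the triangle inequality directly to $W_1$ at the level of measures, inserting the intermediate marginals $(e_{k\bar s_n})_\sharp\omega_n$ and their left limits, whereas you use the single pathwise coupling $(e_t,e_s)_\sharp\omega_n$ and apply the triangle inequality trajectory by trajectory before integrating; both routes use the same ingredients and yield the same bound.
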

\begin{proof}
Let $k_s,k_t\in [0,2^n-1]\cap \Z$ be such that
\begin{equation*}
k_s\bar s_n \le s<(k_s+1)\bar s_n \qquad \mbox{and} \qquad k_t\bar s_n \le t<(k_t+1)\bar s_n.
\end{equation*}
If $k_s=k_t$ then we have
\begin{equation*}
(e_s)_\sharp \omega_n = \mathscr L^{d+1}\llcorner \FT(E_{u(k_s\bar s_n)},s-k_s\bar s_n)
\end{equation*}
and 
\begin{equation*}
\begin{split}
(e_t)_\sharp \omega_n =&~  \mathscr L^{d+1}\llcorner \FT(E_{u(k_s\bar s_n)},t-k_s\bar s_n) \\
= &~ \mathscr L^{d+1}\llcorner \FT(\FT(E_{u(k_s\bar s_n)},s-k_s\bar s_n),t-s).
\end{split}
\end{equation*}
Therefore the map $T:\R^d\times [0,+\infty)\to \R^d\times [0,+\infty)$ defined by
\begin{equation*}
T(x,v)=(x+f'(v)(t-s),v)
\end{equation*}
satisfies the constraint $T_\sharp((e_s)_\sharp \omega_n)=(e_t)_\sharp \omega_n$ and this proves that
\begin{equation*}
W_1((e_t)_\sharp \omega_n, (e_s)_\sharp \omega_n)\le \|f'\|_{L^\infty}|t-s|.
\end{equation*}
Otherwise it holds $k_s<k_t$ and we estimate by the triangular inequality
\begin{equation*}
\begin{split}
W_1((e_t)_\sharp \omega_n, (e_s)_\sharp \omega_n)\le &~ W_1\left((e_s)_\sharp \omega_n, \lim_{r\to (k_s+1)\bar s_n^-}(e_r)_\sharp \omega_n\right) + \sum_{k=k_s+1}^{k_t}W_1\left(\lim_{r\to k\bar s_n^-}(e_r)_\sharp \omega_n, (e_{k\bar s_n})_\sharp \omega_n \right) \\
& +  \sum_{k=k_s+1}^{k_t-1}W_1\left((e_{k\bar s_n})_\sharp \omega_n, \lim_{r\to (k+1)\bar s_n^-}(e_r)_\sharp \omega_n\right) 
+ W_1\left( (e_{k_t\bar s_n})_\sharp \omega_n, (e_t)_\sharp \omega_n\right) \\
\le &~ \|f'\|_{L^\infty}((k_s+1)\bar s_n-s) + \sum_{k=k_s+1}^{k_t}W_1\left(\lim_{r\to k\bar s_n^-}(e_r)_\sharp \omega_n, (e_{k\bar s_n})_\sharp \omega_n \right) \\
& + (k_t-k_s-1)\|f'\|_{L^\infty}\bar s_n + \|f'\|_{L^\infty}(t-k_t\bar s_n),
\end{split}
\end{equation*}
where the second inequality easily follows by the definition of $\omega_n$ and by the case $k_s=k_t$. Assuming $L_n\ge 1$, which trivially holds for $n$ large enough, we get from \eqref{E_est_T} that for every $k=k_s+1, \ldots, k_t$,
\begin{equation*}
\begin{split}
W_1\left(\lim_{r\to k\bar s_n^-}(e_r)_\sharp \omega_n, (e_{k\bar s_n})_\sharp \omega_n \right) 
= &~ \int_{\FT(E_{u((k-1)\bar s_n)},\bar s_n)}|T_k(x,v)-(x,v)|dxdv\\
\le&~ 2\nu((k-1)\bar s_n, k\bar s_n)\times \R^d).
\end{split}
\end{equation*}
Finally we have
\begin{equation*}
W_1((e_t)_\sharp \omega_n, (e_s)_\sharp \omega_n)\le  \|f'\|_{L^\infty}|t-s| + 2\nu((k_s\bar s_n,k_t\bar s_n)\times \R^d).
\end{equation*}
The conclusion follows since 
\begin{equation*}
(k_s\bar s_n,k_t\bar s_n)\subset \left(s-\frac{T}{2^n},t\right). \qedhere
\end{equation*}
\end{proof}

The following theorem is the main result of this section.
\begin{theorem}\label{T_convergence}
The sequence $\omega_n \in \mathcal P(\tilde \Gamma)$ defined in \eqref{E_def_omega} is tight and any of its limit points is a 
Lagrangian representation of $u$. In particular any weak solution to \eqref{E_multiD} with finite entropy production satisfying Assumption \ref{A_u}
admits a Lagrangian representation.
\end{theorem}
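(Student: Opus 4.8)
The plan is to extract a $\tau$-limit $\omega$ of the sequence $\omega_n$ by tightness, and then to verify the three defining properties \eqref{E_repr_formula}, \eqref{E_characteristic}, \eqref{E_reg} one by one, passing to the limit in the estimates of Lemma \ref{L_hor_vert} and Lemma \ref{L_est_st}.

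First I would prove tightness. For the vertical components, the uniform bound $\int_{\tilde\Gamma}\TV_{[0,T)}\gamma^2\,d\omega_n\le C$ from \eqref{E_est_vert}, together with the range bound $0\le\gamma^2\le\|u\|_{L^\infty}$ inherited from the subgraph structure preserved by each $T_k$, makes the sets $\{\TV_{[0,T)}\gamma^2\le\lambda,\ 0\le\gamma^2\le\|u\|_{L^\infty}\}$ compact in $L^1([0,T))$ by Helly's theorem, and by Markov's inequality they carry $\omega_n$-mass at least $1-C/\lambda$; hence the $\gamma^2$-marginals are tight. For the horizontal components I would use \eqref{E_est_hor}: writing $\Psi(\gamma)(t):=\gamma^1(0)+\int_0^t f'(\gamma^2(s))\,ds$, the curves $\Psi(\gamma)$ are equi-Lipschitz with constant $\|f'\|_{L^\infty}$ and have initial point distributed as the (tight) projection of $\mathscr L^{d+1}\llcorner E_{u_0}$, so they form a tight family in the local uniform topology by Ascoli--Arzel\`a; since \eqref{E_est_hor} bounds $\int\sup_t|\gamma^1(t)-\Psi(\gamma)(t)|\,d\omega_n$ by a quantity vanishing as $L_n\to\infty$, the actual $\gamma^1$ differ from this equi-Lipschitz family by a perturbation that is small in $L^1(\omega_n)$, and Markov's inequality upgrades this to tightness of the $\gamma^1$-marginals. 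Tightness of both marginals yields tightness of $\{\omega_n\}$ on $(\tilde\Gamma,\tau)$, so a subsequence converges narrowly to some $\omega\in\mathcal P(\tilde\Gamma)$.

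Next I would obtain \eqref{E_characteristic} and the concentration on $\Gamma$ simultaneously. The functional $\gamma\mapsto\sup_{t\in[0,T)}|\gamma^1(t)-\gamma^1(0)-\int_0^t f'(\gamma^2(s))\,ds|$ is $\tau$-lower semicontinuous (the $L^1$-convergence of $\gamma^2$ and the boundedness and Lipschitz continuity of $f'$ make $t\mapsto\int_0^t f'(\gamma^2)$ converge uniformly, while $\gamma^1$ converges locally uniformly), so by \eqref{E_est_hor} and lower semicontinuity of the integral under narrow convergence its $\omega$-integral is at most $\liminf_n L_n^{-1}(1+L_n^{-1})\nu((0,T)\times\R^d)=0$. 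Hence $\omega$-a.e.\ $\gamma$ satisfies $\gamma^1(t)=\gamma^1(0)+\int_0^t f'(\gamma^2)$, i.e.\ $\gamma^1$ is Lipschitz (so $\gamma\in\Gamma$ and $\omega\in\mathcal M(\Gamma)$) and $\dot\gamma^1=f'(\gamma^2)$ a.e., which is \eqref{E_characteristic}. In the same way \eqref{E_reg} follows from the $L^1$-lower semicontinuity of $\gamma^2\mapsto\TV_{[0,T)}\gamma^2$ applied to \eqref{E_est_vert}, giving $\int_\Gamma\TV_{[0,T)}\gamma^2\,d\omega\le\nu((0,T)\times\R^d)<\infty$.

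The main obstacle is the marginal identity \eqref{E_repr_formula}, because the evaluation map $e_t$ is not $\tau$-continuous: evaluating $\gamma^2$ at a single time is discontinuous for the $L^1$-topology. My plan is to argue in three steps. By construction \eqref{E_def_gamma1} each $T_k$ pushes the free transport of $\mathscr L^{d+1}\llcorner E_{u((k-1)\bar s_n)}$ onto $\mathscr L^{d+1}\llcorner E_{u(k\bar s_n)}$, so inductively $(e_{k\bar s_n})_\sharp\omega_n=\mathscr L^{d+1}\llcorner E_{u(k\bar s_n)}$ at every dyadic time. Then, choosing for each fixed $t$ a dyadic time $k\bar s_n$ within $\bar s_n$ of $t$, Lemma \ref{L_est_st} controls $W_1((e_t)_\sharp\omega_n,\mathscr L^{d+1}\llcorner E_{u(k\bar s_n)})$; since $(\pi_t)_\sharp\nu$ has no atoms the $\nu$-term is small while $\|f'\|_{L^\infty}|t-k\bar s_n|\to0$, and combined with the $W_1$-continuity of $s\mapsto\mathscr L^{d+1}\llcorner E_{u(s)}$ coming from the $L^1$-continuity in time of $u$ this gives $(e_t)_\sharp\omega_n\to\mathscr L^{d+1}\llcorner E_{u(t)}$ in $W_1$ for every $t$. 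Finally, to pass from $\omega_n$ to $\omega$ I would avoid evaluating at a single time: the functional $\gamma\mapsto\int_0^T\varphi(\gamma(t))\psi(t)\,dt$ is $\tau$-continuous for $\varphi\in C_b(X)$ and $\psi\in C_c((0,T))$, so testing with $\varphi\otimes\psi$ and using the pointwise-in-$t$ convergence just established yields $\int_0^T\langle(e_t)_\sharp\omega,\varphi\rangle\psi\,dt=\int_0^T\langle\mathscr L^{d+1}\llcorner E_{u(t)},\varphi\rangle\psi\,dt$; hence $(e_t)_\sharp\omega=\mathscr L^{d+1}\llcorner E_{u(t)}$ for a.e.\ $t$, and this upgrades to every $t$ by right-continuity of both sides (the left one by the convention that $\gamma$ is right-continuous with $e_t=\lim_{s\to t^+}\gamma(s)$, the right one by $L^1$-continuity of $u$). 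I expect this last identification --- reconciling the $W_1$-convergence of the marginals with the narrow convergence of the path measures across the discontinuity of $e_t$ --- to be the delicate point, with the no-atom property of $(\pi_t)_\sharp\nu$ and the $L^1$-time-continuity of $u$ as the crucial inputs.
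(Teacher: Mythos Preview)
Your proposal is correct and follows essentially the same route as the paper: tightness from the integral bounds of Lemma~\ref{L_hor_vert}, then \eqref{E_characteristic} and \eqref{E_reg} by lower semicontinuity of the relevant functionals under narrow convergence, and finally \eqref{E_repr_formula} by first identifying $(e_t)_\sharp\omega_n$ via Lemma~\ref{L_est_st} and then bypassing the discontinuity of $e_t$ through a time-averaging device. The only cosmetic difference is in this last step: the paper averages the evaluation map itself, using the continuous operator $\tilde e_t^{\Delta t}(\gamma):=\frac{1}{\Delta t}\int_t^{t+\Delta t}\gamma(s)\,ds$, whereas you average on the dual side via $\gamma\mapsto\int_0^T\varphi(\gamma(t))\psi(t)\,dt$; both yield the identity for a.e.\ $t$ and are then upgraded to every $t$ by the same continuity considerations.
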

\begin{proof}
Step 1. The sequence $(\omega_n)_{n\in \N}$ is tight in $\mathcal P(\tilde \Gamma)$. 
For every $n\in \N$ and $M,R>0$ we denote by $\tilde \Gamma_{n,M,R}\subset \tilde \Gamma$ the set of curves $\gamma$ such that the following conditions hold:
\begin{enumerate}
\item $\gamma(0)\in B_R(0)\times \left[0,\|u\|_{L\infty}\right]$;
\item for every $k=1,\ldots, 2^n-1$
\begin{equation*}
 \Lip (\gamma\llcorner [(k-1)2^{-n}T,k2^{-n}T))\le \|f'\|_{L^\infty};
\end{equation*}
\item $\TV_{[0,T)}\gamma^2\le M$;
\item 
\begin{equation*}
\sum_{k=1}^{2^n-1}|\gamma^1(2^{-n}Tk) - \gamma^1(2^{-n}Tk-) | \le M2^{-n/2}.
\end{equation*}
\end{enumerate}
Let moreover
\begin{equation*}
\Gamma_{M,R}:= \left\{ \gamma \in \Gamma: \gamma(0)\in B_R\times [0,\|u\|_\infty], \Lip(\gamma^1)\le \|f'\|_{L^\infty}, \TV\gamma^2 \le M \right\}.
\end{equation*}
For every $M,R>0$ the set
\begin{equation*}
\tilde \Gamma_{M,R}:= \Gamma_{M,R}\cup \bigcup_{n=1}^\infty \tilde \Gamma_{n,M,R}
\end{equation*}
is compact in $\tilde \Gamma$ with the topology $\tau$. Moreover it follows from Lemma \ref{L_hor_vert} that for every $\e>0$ there exist $M,R>0$ such that
for every $n\in \N$ it holds $\omega_n(\tilde\Gamma_{M,R}^c)\le \e$.

Step 2. Let $\omega$ be a limit point of the sequence $(\omega_n)_{n\in \N}$. Then \eqref{E_repr_formula} holds true.
We prove separately the two convergences in the sense of distributions:
\begin{enumerate}
\item for every $t\in [0,T)$
\begin{equation}\label{E_repr1}
\lim_{n\to \infty} (e_t)_\sharp \omega_n = \mathscr L^{d+1}\llcorner E_{u(t)};
\end{equation}
\item for every $t\in [0,T)$
\begin{equation}\label{E_repr2}
\lim_{n\to \infty}(e_t)_\sharp \omega_n= (e_t)_\sharp \omega.
\end{equation}
\end{enumerate}
We first notice that \eqref{E_repr1} is trivially true for every $t\in [0,T)$ of the form $t=k T 2^{-N}$ with $k,N\in \N$ since in this case it holds
$(e_t)_\sharp \omega_n = \mathscr L^{d+1}\llcorner E_{u(t)}$ for every $n\ge N$.
We observe that $u$ is continuous in $L^1(\R^d)$ with respect to $t$ by assumption and that since $(\pi_t)_\sharp \nu$ has no atoms
the sequence of curves $t\mapsto (e_t)_\sharp \omega_n$ is converging
uniformly to a continuous limit with respect to the $W_1$ distance by Lemma \ref{L_est_st}.
Therefore \eqref{E_repr1} holds for every $t\in [0,T)$ by continuity.

Let $\omega_{n_k}$ be a weakly convergent subsequence and denote by $\omega$ its limit. 
Notice that $e_t$ is not continuous on $\tilde \Gamma$ endowed with the topology $\tau$ introduced above, so we cannot directly deduce
\eqref{E_repr2} from the weak convergence of $\omega_{n_k}$ to $\omega$. Let $\Delta t>0$ and consider 
$\tilde e_t^{\Delta t}: \tilde \Gamma \to \R^d\times [0,+\infty)$ defined by
\begin{equation*}
\tilde e_t^{\Delta t}(\gamma):=\frac{1}{\Delta t}\int_t^{t+\Delta t}e_s(\gamma) ds.
\end{equation*}
This operator is actually continuous and therefore we get that 
\begin{equation}\label{E_Leb1}
\lim_{k\to \infty}(\tilde e_t^{\Delta t})_\sharp \omega_{n_k} = (\tilde e_t^{\Delta t})_\sharp \omega.
\end{equation}
On the other hand by \eqref{E_repr1} it holds 
\begin{equation}\label{E_Leb2}
\lim_{k\to \infty}(\tilde e_t^{\Delta t})_\sharp \omega_{n_k}= \frac{1}{\Delta t}\int_{t}^{t+\Delta t}\mathscr L^{d+1}\llcorner E_{u(s)}ds.
\end{equation}
From \eqref{E_Leb1} and \eqref{E_Leb2} we get that \eqref{E_repr2} holds for $\mathscr L^1$-a.e. $t\in [0,T)$.
The equality actually holds for every $t\in [0,T)$ since $(e_t)_\sharp \omega$ is continuous with respect to $t$.

Step 3. The measure $\omega$ is concentrated on characteristic curves and \eqref{E_reg} holds true. 
Notice that the function $g:\tilde \Gamma\to \R$ defined by
\begin{equation*}
g(\gamma):=  \sup_{t\in [0,T)}\left|\gamma^1(t)-\gamma^1(0) - \int_0^t f'(\gamma^2(s))ds\right|
\end{equation*}
is lower semicontinuous, therefore
\begin{equation*}
\int_{\tilde\Gamma}g(\gamma) d \omega \le \lim_{n\to \infty}\int_{\tilde\Gamma}g(\gamma) d \omega_n, 
\end{equation*}
which is actually equal to 0 by \eqref{E_est_hor}, since $L_n \to \infty$ as $n\to \infty$.
Similarly \eqref{E_est_vert} implies \eqref{E_reg}.
\end{proof}

In the last part of this section we show how it is possible to decompose the entropy production measures $\mu^\eta$ of any entropy $\eta$ along the
characteristic curves.  

Let $\eta$ be a convex entropy and set
\begin{equation*}
\mu^{\eta}_\gamma=(\Id, \gamma)_\sharp \Big( \big( \eta'' \circ \gamma^2 \big) \tilde D \gamma^2 \Big) + 
\eta''(v) \left(\H^1\llcorner E_{\gamma}^+ -\H^1\llcorner E_{\gamma}^-\right),
\end{equation*}
where
\begin{equation*}
\begin{split}
E_\gamma^+:=&\{(t,x,v): \gamma^1(t)=x, \gamma^2(t-)<\gamma^2(t+), v \in (\gamma^2(t-),\gamma^2(t+)) \}, \\
E_\gamma^-:=&\{(t,x,v): \gamma^1(t)=x, \gamma^2(t+)<\gamma^2(t-), v \in (\gamma^2(t+),\gamma^2(t-)) \}.
\end{split}
\end{equation*}
Accordingly we define
\begin{equation*}
\bar \mu^\eta:=\int_\Gamma \mu^\eta_\gamma \, d\omega.
\end{equation*}

In the next proposition we exploit the relation between the measures $\mu$ in Proposition \ref{P_kin_t} and $\bar \mu^\eta$.

\begin{proposition}\label{P_kin_meas}
Let $\bar \eta(u)=u^2/2$ and $\omega\in \mathcal P_1(\tilde \Gamma)$ as in Theorem \ref{T_convergence}.
Then
\begin{equation}\label{E_kin_lag}
\mu = \int_{\Gamma}\mu^{\bar \eta}_\gamma d\omega(\gamma) \qquad \mbox{and}\qquad 
|\mu| = \int_{\Gamma}\left|\mu^{\bar \eta}_\gamma\right| d\omega(\gamma).
\end{equation}
\end{proposition}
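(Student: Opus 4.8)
The plan is to establish the two identities in \eqref{E_kin_lag} separately: the first by verifying that $\bar\mu^{\bar\eta}:=\int_\Gamma\mu^{\bar\eta}_\gamma\,d\omega$ solves the same kinetic equation as $\mu$, and the second by a comparison of total masses that rules out any cancellation in the superposition. For the first identity I would fix $\varphi\in C^1_c((0,T)\times\R^d\times(0,+\infty))$ and, for $\omega$-a.e.\ $\gamma$, apply the chain rule for $\BV$ functions to $t\mapsto\varphi(t,\gamma^1(t),\gamma^2(t))$. Since $\gamma^1$ is Lipschitz and $\gamma^2\in\BV$, the derivative splits into the diffuse term $[\partial_t\varphi+\nabla_x\varphi\cdot\dot\gamma^1]\,\mathscr L^1+\partial_v\varphi\,\tilde D\gamma^2$ and a jump term, where at each jump the increment $\varphi(t,\gamma^1(t),\gamma^2(t+))-\varphi(t,\gamma^1(t),\gamma^2(t-))$ equals the integral of $\partial_v\varphi$ over the connecting vertical segment, i.e.\ against $\H^1\llcorner E_\gamma^+$ for an upward jump and $-\H^1\llcorner E_\gamma^-$ for a downward one. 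As $\varphi$ has compact support in time the total derivative integrates to zero over $[0,T)$, and using $\dot\gamma^1=f'(\gamma^2)$ from \eqref{E_characteristic} together with $\bar\eta''\equiv1$ this becomes
\[
\int_0^T\big[\partial_t\varphi+f'(\gamma^2)\cdot\nabla_x\varphi\big](t,\gamma(t))\,dt=-\int\partial_v\varphi\,d\mu^{\bar\eta}_\gamma .
\]

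Next I would integrate this against $\omega$ and insert the representation formula \eqref{E_repr_formula}, $(e_t)_\sharp\omega=\mathscr L^{d+1}\llcorner E_{u(t)}$, which turns the left-hand side into $\int\chi\,[\partial_t\varphi+f'(v)\cdot\nabla_x\varphi]\,dt\,dx\,dv$; comparing with \eqref{E_kin_t} tested against $\varphi$ then identifies the distributional $v$-derivatives of $\mu$ and of $\bar\mu^{\bar\eta}$ (the precise sign must be tracked carefully through the conventions of Proposition~\ref{P_kin_t} and the orientation of $E_\gamma^\pm$). To upgrade equality of $v$-derivatives to equality of measures I would use that both $\mu$ and $\bar\mu^{\bar\eta}$ are finite and supported in the slab $\{0\le v\le\|u\|_{L^\infty}\}$ — $\mu$ because $\chi$ is, and $\bar\mu^{\bar\eta}$ because $\gamma^2$ takes values there, so that $\tilde D\gamma^2$ and $E_\gamma^\pm$ do too. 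Any finite measure $\sigma$ with $\partial_v\sigma=0$ and bounded support in $v$ vanishes: testing against products $\zeta(t,x)\psi(v)$ reduces this to the one-dimensional fact that a compactly supported measure with zero derivative is trivial. This yields the first identity in \eqref{E_kin_lag}.

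For the second identity, the inequality $|\mu|\le\int_\Gamma|\mu^{\bar\eta}_\gamma|\,d\omega$ follows immediately from the first identity and the subadditivity of total variation under integration. For the reverse inequality I would observe that the three constituents of $\mu^{\bar\eta}_\gamma$ are mutually singular — the pushforward of $\tilde D\gamma^2$ lives on the continuity part of the graph of $\gamma$, while $\H^1\llcorner E_\gamma^+$ and $\H^1\llcorner E_\gamma^-$ live on the vertical segments over upward, respectively downward, jump times — so that $|\mu^{\bar\eta}_\gamma|((0,T)\times\R^d\times\R)=\TV_{[0,T)}\gamma^2$. By \eqref{E_est_vert} and the lower semicontinuity of $\gamma\mapsto\TV_{[0,T)}\gamma^2$ already exploited in Theorem~\ref{T_convergence} one has $\int_\Gamma\TV_{[0,T)}\gamma^2\,d\omega\le\nu((0,T)\times\R^d)=|\mu|((0,T)\times\R^d\times\R)$. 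Hence $|\mu|$ and $\int_\Gamma|\mu^{\bar\eta}_\gamma|\,d\omega$ are finite measures satisfying $|\mu|\le\int_\Gamma|\mu^{\bar\eta}_\gamma|\,d\omega$ with equal total mass, so they must coincide.

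The step I expect to be the main obstacle is the per-curve chain rule, namely verifying that the Cantor part of $D\gamma^2$ is correctly absorbed into the diffuse term and that the jump increments assemble into the signed measures $\H^1\llcorner E_\gamma^\pm$ with the right orientation, together with the measurability of $\gamma\mapsto\mu^{\bar\eta}_\gamma$ required to interchange $\int_\Gamma$ with the distributional pairing. Once these points are settled, the slab-support argument and the total-mass comparison are routine; it is worth stressing that the total-mass equality is exactly what promotes the pointwise measure bound into the full variation identity, so that no separate ``non-crossing'' or no-cancellation argument for the characteristics is needed.
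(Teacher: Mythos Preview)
Your proposal is correct and follows essentially the same route as the paper: a $\BV$ chain rule along characteristics combined with the representation formula \eqref{E_repr_formula} to identify $\partial_v\mu$ with $\partial_v\bar\mu^{\bar\eta}$, and then the total-mass comparison via lower semicontinuity of $\TV_{[0,T)}\gamma^2$ and \eqref{E_est_vert} for the variation identity. The only cosmetic differences are that the paper tests with $\eta'(v)\phi$ (keeping a general $\eta$ in view of Proposition~\ref{P_any_ent}) rather than with $\varphi$ directly, and that you make explicit the slab-support step needed to pass from equality of $v$-derivatives to equality of measures, which the paper leaves implicit.
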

\begin{proof}
Let $\phi \in C^1_c((0,T)\times \R^d\times (0,+\infty))$. Testing \eqref{E_kin_t} with $\bar \phi:=\phi\eta'(v)$ we get
\begin{equation}\label{E_exp_1}
\int\partial_v\bar \phi  d\mu = \int_{E_u}\eta'(v)\left(\partial_t \phi + f'(v)\cdot \nabla_x\phi\right)dtdxdv.
\end{equation}
Being $\omega$ a Lagrangian representation of $u$ it holds
\begin{equation}\label{E_long1}
\begin{split}
 \int_{E_u}\eta'(v)\left(\partial_t \phi + f'(v)\cdot \nabla_x\phi\right)dtdxdv = &~
 \int_\R\int_\Gamma \eta'(\gamma^2(t))\left( \partial_t\phi(t,\gamma(t))+f'(\gamma^2(t))\cdot \nabla_x\phi(t,\gamma(t))\right) d\omega dt \\
 = &~ \int_\Gamma\int_{(0,T)}  \eta'(\gamma^2(t))\left( \partial_t\phi(t,\gamma(t))+\dot{\gamma}^1(t)\cdot \nabla_x\phi(t,\gamma(t))\right) dt d\omega \\
 = &~ \mathrm{I}.
\end{split}
\end{equation}
Set $\phi_\gamma:(0,T)\to \R$ be defined by $\phi_\gamma(t)=\phi(t,\gamma(t))$.
By \eqref{E_reg} for $\omega$-a.e. $\gamma \in \mathcal P(\Gamma)$ the function $\phi_\gamma$ has bounded variation and by the chain rule for $BV$ functions the following equality between measures holds:
\begin{equation*}
\begin{split}
D_t(\eta'\circ\gamma^2\phi_\gamma) = &~ \eta'(\gamma^2(t))\left(\partial_t\phi(t,\gamma(t)) + 
\dot\gamma^1(t)\cdot\nabla_x\phi(t,\gamma(t))\right)\mathscr L^1 + \eta'(\gamma^2(t))\partial_v\phi(t,\gamma(t))\tilde D_t\gamma^2 \\
& + \phi_\gamma(t)\eta''(\gamma^2(t))\tilde D_t\gamma^2 + 
 \sum_{t_j\in J_\gamma}\left(\eta'(\gamma^2(t_j+))\phi_\gamma(t_j+)-\eta'(\gamma^2(t_j-))\phi_\gamma(t_j-)\right)\delta_{t_j},
\end{split}
\end{equation*}
where $J_\gamma$ denotes the jump set of $\gamma$ and $\tilde D_t\gamma^2$ denotes the diffuse part of the measure $D_t\gamma^2$, i.e. the absolutely continuous part plus the Cantor part (see \cite{AFP_book}).
Plugging it into \eqref{E_long1} we get
\begin{equation}\label{E_long2}
\begin{split}
\mathrm I =&~ - \int_\Gamma \int_{(0,T)}\left( \eta'(\gamma^2(t))\partial_v\phi(t,\gamma(t)) + \eta''(\gamma^2(t))\phi(t,\gamma(t))\right) 
d \tilde D_t \gamma^2 (t)d\omega (\gamma) \\
&~ - \int_\Gamma \sum_{t_j\in J_\gamma}\left(\eta'(\gamma^2(t_j+))\phi_\gamma(t_j+)-\eta'(\gamma^2(t_j-))\phi_\gamma(t_j-)\right)d\omega(\gamma) \\
 =&~  - \int_\Gamma \int_{\R^{d+2}} \partial_v \bar \phi d\mu^{\bar \eta}_\gamma d\omega(\gamma).
\end{split}
\end{equation}
Comparing this with \eqref{E_exp_1} we get the first expression in \eqref{E_kin_lag}.

In order to prove the second part of the statement notice that  the following inequality between measures trivially holds:
\begin{equation*}
|\mu| \le \int_{\Gamma}\left|\mu^{\bar \eta}_\gamma\right| d\omega(\gamma).
\end{equation*}
In order to conclude that the equality holds it is actually enough to check that 
\begin{equation*}
\left(\int_{\Gamma}\left|\mu^{\bar \eta}_\gamma\right| d\omega(\gamma)\right)((0,T)\times \R^d\times (0,+\infty)) \le |\mu|((0,T)\times \R^d\times (0,+\infty)).
\end{equation*}
Being $|\mu^{\bar\eta}_\gamma|((0,T)\times \R^d\times (0,+\infty)) =\TV_{(0,T)}\gamma^2$ lower semicontinuous with respecto to $\gamma$
it holds 
\begin{equation*}
\left(\int_{\Gamma}\left|\mu^{\bar \eta}_\gamma\right| d\omega(\gamma)\right)((0,T)\times \R^d\times (0,+\infty)) \le
\liminf_{n\to \infty}\left(\int_{\Gamma}\TV_{(0,T)}(\gamma^2) d\omega_n(\gamma)\right)((0,T)\times \R^d\times (0,+\infty)).
\end{equation*}
By Lemma \ref{L_hor_vert} we finally have that
\begin{equation*}
\begin{split}
\liminf_{n\to \infty}\left(\int_{\Gamma}\TV_{(0,T)}(\gamma^2) d\omega_n(\gamma)\right)((0,T)\times \R^d\times (0,+\infty))\le &~
\nu ((0,T)\times \R^d) \\
= &~ |\mu|((0,T)\times \R^d\times (0,+\infty))
\end{split}
\end{equation*}
and this concludes the proof.
\end{proof}
 Finally we exploit the well-known relation between the measure $\mu$ and the entropy dissipation measures $\mu^\eta$ to decompose them
 along the characteristic curves.
 
 \begin{proposition}\label{P_any_ent}
 For every smooth convex entropy $\eta$ the following representation formula holds:
 \begin{equation*}
 (\pi_{t,x})_\sharp \left( \int_\Gamma \mu^\eta_\gamma d\omega \right) = \mu^\eta.
 \end{equation*}
 \end{proposition}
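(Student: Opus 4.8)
The plan is to deduce the statement for a general smooth convex entropy $\eta$ from the quadratic entropy $\bar\eta(v)=v^2/2$ already treated in Proposition \ref{P_kin_meas}, by combining a pointwise-in-$\gamma$ scaling identity with the classical algebraic relation between the kinetic measure $\mu$ and the entropy production measure $\mu^\eta$.

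First I would record the elementary identity
\[
\mu^\eta_\gamma = \eta''(v)\,\mu^{\bar\eta}_\gamma ,
\]
understood as the measure $\mu^{\bar\eta}_\gamma$ weighted by the function $(t,x,v)\mapsto\eta''(v)$. This is immediate from the two defining pieces of $\mu^\eta_\gamma$: on the diffuse part $(\Id,\gamma)_\sharp\big((\eta''\circ\gamma^2)\tilde D\gamma^2\big)$ the $v$-coordinate of every point in the support equals $\gamma^2(t)$, so weighting $\mu^{\bar\eta}_\gamma$ (for which $\bar\eta''\equiv 1$) by $\eta''(v)$ reproduces exactly the factor $\eta''(\gamma^2(t))$; on the jump part the weight $\eta''(v)$ already appears explicitly in front of $\H^1\llcorner E_\gamma^+-\H^1\llcorner E_\gamma^-$. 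Since $\eta''$ is bounded on the bounded range of $u$ and $\int_\Gamma\TV_{[0,T)}\gamma^2\,d\omega<\infty$ by \eqref{E_reg}, the measure $\int_\Gamma\mu^\eta_\gamma\,d\omega$ is well defined and finite; and since the weight $\eta''(v)$ depends only on $v$, not on $\gamma$, Fubini lets it commute with the integration in $\gamma$, giving
\[
\int_\Gamma\mu^\eta_\gamma\,d\omega = \eta''(v)\int_\Gamma\mu^{\bar\eta}_\gamma\,d\omega = \eta''(v)\,\mu ,
\]
where the last equality is precisely Proposition \ref{P_kin_meas}.

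It then remains to project onto the $(t,x)$ variables and identify $(\pi_{t,x})_\sharp\big(\eta''(v)\mu\big)$ with $\mu^\eta$. This is the classical relation between the kinetic measure and the entropy dissipation: testing \eqref{E_kin_t} against $\eta'(v)\psi(t,x)$ for $\psi\in C^1_c((0,T)\times\R^d)$ and integrating by parts once in $v$, using $\eta(u)=\int_0^{+\infty}\eta'(v)\chi\,dv$ and $Q(u)=\int_0^{+\infty}\eta'(v)f'(v)\chi\,dv$, one finds $\langle\mu^\eta,\psi\rangle=\int\eta''(v)\psi\,d\mu$, that is $\mu^\eta=(\pi_{t,x})_\sharp\big(\eta''(v)\mu\big)$, with the global sign fixed consistently with the convention of Proposition \ref{P_kin_meas}. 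Chaining the two displays yields $(\pi_{t,x})_\sharp\big(\int_\Gamma\mu^\eta_\gamma\,d\omega\big)=\mu^\eta$, as claimed.

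The computation is essentially bookkeeping, so I do not anticipate a genuine obstacle; the only delicate points are the consistent tracking of signs across the scaling identity, Proposition \ref{P_kin_meas} and the classical relation, and the justification that the $v$-weight $\eta''(v)$ may be pulled through the $\omega$-integral. As a self-contained alternative that bypasses the intermediate measure $\eta''(v)\mu$, one can instead mimic the proof of Proposition \ref{P_kin_meas} directly: test $\mu^\eta$ against $\psi(t,x)$, pass to the Lagrangian representation, and apply the $BV$ chain rule to $t\mapsto\eta'(\gamma^2(t))\psi(t,\gamma^1(t))$ along each characteristic. The diffuse term $(\eta''\circ\gamma^2)\,\tilde D\gamma^2$ and the jump contributions $\int_{\gamma^2(t_j-)}^{\gamma^2(t_j+)}\eta''(v)\,dv$ so produced are exactly those obtained by projecting $\mu^\eta_\gamma$ onto $(t,x)$ and testing against $\psi$, and integrating in $\omega$ yields the claim with the correct sign automatically.
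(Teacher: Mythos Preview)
Your proposal is correct and uses exactly the same three ingredients as the paper: the pointwise identity $\mu^\eta_\gamma=\eta''(v)\mu^{\bar\eta}_\gamma$, the Lagrangian computation behind Proposition~\ref{P_kin_meas}, and the classical relation $\mu^\eta=(\pi_{t,x})_\sharp(\eta''(v)\mu)$ obtained by testing the kinetic equation against $\eta'(v)\psi(t,x)$. The paper's argument is in fact organized the way you describe in your ``self-contained alternative'' (it redoes \eqref{E_long1}--\eqref{E_long2} with $\bar\phi=\eta'(v)\varphi(t,x)$ and then invokes $\mu^\eta_\gamma=\eta''(v)\mu^{\bar\eta}_\gamma$), while your main route simply factors the same computation through the already-proved Proposition~\ref{P_kin_meas}; the content is identical.
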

 \begin{proof}
 Assume without loss of generality that $\eta(0)=0$ and $q(0)=0$. Given $\phi \in  C^1_c((0,T)\times \R^d)$ and by means of the elementary
 identities
 \begin{equation*}
	u(t,x) = \int_0^{+\infty} \chi_{[0,u(t,x)]} (v) \, dv
	\end{equation*}
	and
	\begin{equation*}
	\eta(u(t,x)) = \int_0^{+\infty} \chi_{[0,u(t,x)]} (v)\eta^\prime(v) \, dv, \qquad  q(u(t,x)) = \int_0^{+\infty} \chi_{[0,u(t,x)]}(v) q^\prime(v) \, dv
	\end{equation*}
we easily get
\begin{equation}\label{E_diss1}
- \langle \eta(u)_t + \div_x(\mathbf q(u)), \phi \rangle = \int_{E_u} \eta^\prime(v) \big(  \varphi_t(t,x) + f^\prime(v) \cdot \nabla_x \varphi(t,x) \big) \, dtdxdv.
\end{equation}
 The same computation as in \eqref{E_long1} and \eqref{E_long2} with $\bar \phi(t,x,v)=\eta'(v)\varphi(t,x)$ leads to
 \begin{equation}\label{E_diss2}
 \int_{E_u} \eta^\prime(v) \big(  \varphi_t(t,x) + f^\prime(v) \cdot \nabla_x \varphi(t,x) \big) \, dtdxdv = - \int_\Gamma \int \eta''(v)\phi d\mu^{\bar \eta}_\gamma d\omega.
 \end{equation}
 The conclusion follows from \eqref{E_diss1}, \eqref{E_diss2} and the elementary identity
 \begin{equation*}
 \mu^\eta_\gamma=\eta''(v)\mu^{\bar \eta}_\gamma. \qedhere
 \end{equation*}
  \end{proof}

\begin{remark}
A strictly related statement to Property (3') in the Introduction is the following claim: 
a Lagrangian representation $\omega$ is concentrated on a set of curves $\gamma\in \Gamma$ such that $D_t\gamma^2$ is purely atomic.
Notice that this formulation is natural for general smooth fluxes, even without any nonlinearity assumption.
As already mentioned in the introduction this claim has been proved in several space dimension only for continuous entropy solutions in \cite{BBM_multid},
where actually $D_t\gamma^2=0$ for $\omega$-a.e. $\gamma \in\Gamma$.
\end{remark}

\bibliographystyle{alpha}

\end{document}